\newtheorem{deff}{Definition}[section]
\newtheorem{lem}[deff]{Lemma}
\newtheorem{prop}[deff]{Proposition}
\newtheorem{corr}[deff]{Corollary}
\newcommand{\comment}[1]{}
\newcommand{\PP}[0]{\mathbf{P}}
\begin{document}
 
\title{The exact asymptotic  for the stationary distribution of some unreliable systems.}
\author{Pawe{\l} Lorek  
\\{\it  University of Wroc{\l}aw}
}
\date{September 1, 2009}
 \maketitle


\begin{abstract}
In this paper we find asymptotic distribution for some unreliable networks. Using Markov Additive Structure and Adan, Foley, McDonald \cite{AdaFolMcD08} method, we find the exact asymptotic for the stationary distribution. With the help of MA structure and matrix geometric approach, we also investigate the asymptotic when breakdown intensity   is small. In particular, two different asymptotic regimes for small breakdown intensity suggest two different patterns of large deviations, which is confirmed by simulation study. \par 


\medskip \par
\noindent
{\sl Keywords:} Queueing systems, unreliable server, $h$-transform, Feynman-Kac kernel, matrix-geometric approach, large deviations, exact asymptotic.
\end{abstract}

\section{Introduction}

In this paper we consider problem of finding an exact asymptotic of some non-standard queueing systems.
 We consider two models: Model 1 is an unreliable $M/M/1$  system, and Model 2 is a   system consisting of 2 servers: server 1 being unreliable and server 2 reliable one with a  possible feedback from server 1 to 2. There are a lot of practical problems modelled as Markov chains using 2 or 3 variables. Explicit formulas for stationary distribution can be found only in some special cases. That is why studies on asymptotic for such stationary distributions have been actively conducted by theoretical- and application-oriented researches. There are several techniques available, starting with matrix geometric approach (Neuts, \cite{Neu_book}), matrix analytic method (for recent related work see 
Liu, Miyazawa, Zhao \cite{LMZ07}, Miyazawa, Zhao \cite{MiyZha04}   and 
Tang, Zhao \cite{TanZha04}) or a method of Adan, Foley, McDonald \cite{AdaFolMcD08} which we are going to use widely in this paper.
 \par
  To describe our results, let us start with a brief description of the models. In the unreliable $M/M/1$ system, the customers arrive  according to a Poisson process with intensity $\lambda$ and are served with intensity $\mu$. Moreover, there is an external Markov process which governs the breakdowns and repairs: with intensity $\alpha$ the server can change status from  $Up$ to $Down$, and with intensity $\beta$ vice-versa. While server is in $Down$ status, customers are no longer served, but new ones can join the queue at the server.
\par 
  Model 2 consists of two servers: Customers arrive to the server 2, which is reliable, according to a Poisson process with intensity $\lambda$ and after being served they are directed to the unreliable server 1 which is as the one described above. The service rate at both servers is $\mu$. After being served at server 1 the customer leaves the network with probability $p$ and with probability $1-p$ is rerouted back to the queue at server 1. 

\par 
  The situation described above is different than   {\sl the loss  regime}. In this regime, customers arriving when a server is in $Down$ status, are lost (to the $Down$ server). In \cite{SauDad03}, Sauer and Daduna  showed that under this regime  the stationary distribution of network of unreliable servers is  of product form: the stationary distribution of a pure Jackson network and the stationary distribution of the breakdowns/repairs process. In such a network when customer arrives while server is in $Down$ status, it is lost to the server, but not to the network: it is rerouted - according to some routing regime - to some other server which is in $Up$ status. 
\par

 However, if a customer can join the queue while the server is broken, then the stationary distribution is not of product from, as can be seen in  White and Christie \cite{WhiChr58}. There, the authors give the stationary distribution of Model 1 only. For the Model 2 we are not aware of any results, neither exact distribution, nor asymptotic one.
\par 

In our paper, we give exact asymptotic for both models, following the   method of  Adan, Foley, McDonald \cite{AdaFolMcD08}.
Using Markov Additive Process approach we can clearly show all the differences between two models. 

\par 
 We also consider the behaviour of the ``limiting system'', i.e. the system in which the  breaking probability  $\alpha$ goes to 0. From the method of the above authors we are able to conclude   the exact asymptotic of such ``limiting system'',  but only for   Model 1 and for  some set of parameters: when $\mu<\lambda+\beta$. It turns out to be the same (up to a constant) as the stationary distribution of a $M/M/1$ queue. For the other set of parameters ($\mu>\lambda+\beta$), this method does not lead to a valid asymptotic. However, using the matrix geometric approach (Neuts, \cite{Neu_book}) we show
that then the ``limiting system'' for Model 1 still has the same (up to a constant) stationary distribution as $M/M/1$ queue. The matrix geometric approach, however,  does not give us any information about constants.
Nevertheless we conclude  two different ways in which the system can accumulate a big number of customers. When $\mu<\lambda+\beta$, then in most cases a path leading to a big queue is to be in $Up$ status, and to accumulate a big number of customers, exactly like in standard $M/M/1$ queue (the system does not manage to service customers). The breakdowns/repairs of the system do not have big influence on large deviations. However, if $\mu>\lambda+\beta$, then in most cases a big number of customers is accumulated while the system is in $Down$ status. We illustrate it with simulations (for small $\alpha$), see Figure \ref{fig:1serv_unr} and description on page \pageref{desc:two_ldp} for details.
\par
Furthermore, for Model 2 the method of Adan, Foley, McDonald \cite{AdaFolMcD08} does not lead to a valid asymptotic for the ``limiting system''. Also, matrix geometric approach is not applicable.
\par 
 For the related work, but using different technique see for example Liu, Miyazawa, Zhao \cite{LMZ07}, Miyazawa, Zhao \cite{MiyZha04}   and Tang, Zhao \cite{TanZha04}. Authors therein use matrix analytic method. It uses the fact that some stationary distributions can be presented in matrix form and shown to be solutions of Markov renewal equation, this way decay rates are considered. 
\par In Section 2 we give detailed description of both models, present and discuss all the results. The proofs are in Section 3.

\section{Unreliable server systems  and results}\label{sec:results}
\subsection{Description of systems}\label{sec:description}
 Model 1 is  a following  system consisting of 1 server: customers arrive according to an external Poisson arrival stream with intensity $\lambda$ and are served according to the First Come First Served (FCFS) regime. Each of them requests a service which is exponentially distributed with mean 1. Service is provided with intensity $\mu$.
    There is an external process on the state space $\{Up,Down\}$: with intensity $\alpha$ the server changes status form $Up$ to $Down$ and with intensity $\beta$ from $Down$ to $Up$; $Down$-to-$Up$ and $Up$-to-$Down$ times are exponentially distributed. When the server is broken it immediately stops service, the customer being served is redirected back to the queue. When a new customer arrives while the server is in the $Down$ status, it joins the queue at the server. We assume that all service times, inter-arrivals time, $Down$-to-$Up$ and $Up$-to-$Down$ times constitute an independent family of random variables.
If number of customers is strictly positive, then the transition intensities are as   depicted in Figure \ref{fig:1serv_unr}.
\bigskip\par
  \begin{figure}[h]	
  \begin{center}
\includegraphics{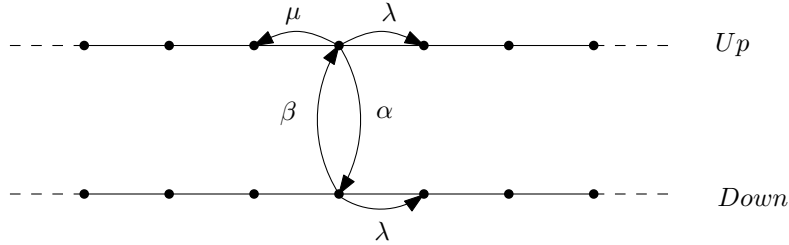}
  \caption{Transitions of Model 1: Unreliable single server.}\label{fig:1serv_unr}
    \end{center}
  \end{figure}
\noindent
Otherwise, if the number of customers is 0, then the transition intensities are similar, except there is no transition from $(0,Up)$ to $(-1,Up)$.
\par 
Model 2 consists of 2 servers. The customers arrive to the reliable  server 2 according to a Poisson process with intensity $\lambda$ and are served there with intensity $\mu$. After being served they are directed to the unreliable server 1, which is exactly unreliable single server system described in Model 1. The service intensity at both systems is $\mu$. After being served at server 2 the customer leaves the network with probability $p$ and with probability $1-p$ it   is rerouted back to join the queue at server 2. The system  is depicted  in Figure \ref{fig:2serv}.

 \begin{figure}[h]	
  \begin{center}
\includegraphics{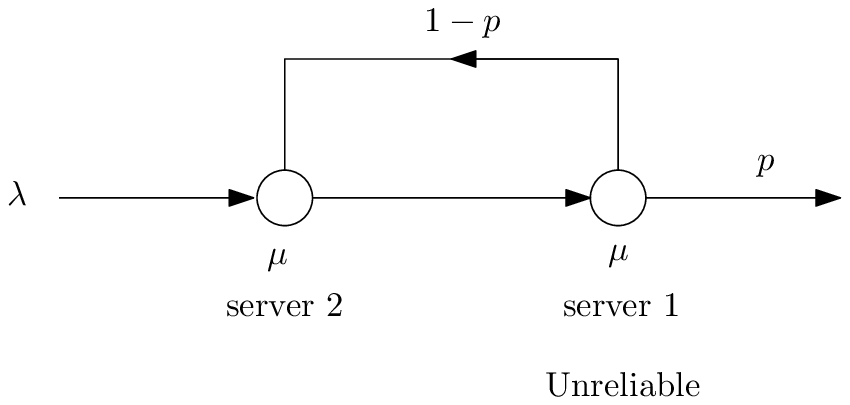}
  \caption{Model 2.}\label{fig:2serv}
    \end{center}
  \end{figure}

\noindent 

    Let $X^{(1)}(t)$ denotes the number of customers present at the server in Model 1 at time $t\geq 0$, either waiting or in service, and let $\sigma(t)\in\{Up,Down\}$ denotes the status of the server. 
 Similarly for Model 2:   $X^{(2)}(t)$ denotes the number of customers present at server 1, $Y^{(2)}(t)$ the number of customers at server 2 and $\sigma(t)$ the status of the unreliable server 1, all at time $t\geq 0$.
We denote the process of Model 1 by $\mathbf{Z}^{(1)}= \{(X(t)^{(1)},\sigma(t)),t\geq 0\}$ and the process of Model 2 by $\mathbf{Z}^{(2)}= \{(X(t)^{(2)},Y^{(2)}(t),\sigma(t)),t\geq 0\}$ . The state space of $\mathbf{Z}^{(1)}$ is $E^{(1)}=\{(x,\sigma),\ x\in\mathbb{N},\ \sigma\in\{Up,Down\}\}$ and the state space of $\mathbf{Z}^{(2)}$ is $E^{(2)}=\{(x,y,\sigma),\ x,y \in\mathbb{N},\ \sigma\in\{Up,Down\}\}$. In the following, the superscripts \ ${}^{(1)}, {}^{(2)}$  denote that constant/number is associated with Model 1 or Model 2 respectively. To have concise notation, we  identify $\{Up,Down\}$ with $\{U ,D \}$. 

\noindent
Throughout the paper we assume that $p>0$ and that the system is not trivial, i.e.
$$\lambda>0, \qquad \mu>0, \qquad \alpha>0, \qquad \beta>0.$$
\noindent 
 Moreover we assume that 
\begin{equation}\label{eq:stability}
\lambda<{\beta\over\alpha+\beta}\mu p,
\end{equation}
which implies that both systems are stable (for Model 1 we mean that condition holds with $p=1$). Actually for Model 1 and for Model 2 with $p=1$ it is ``if and only if'' condition. See  Lemma \ref{lem:stability} for details. 

We can consider ${\beta\over\alpha+\beta}\mu$ as the effective service rate of the unreliable server.
Stability in this case means that we have the unique stationary distribution, which we denote by $\pi$. It will be clear from the context whether $\pi$ is associated with Model 1 or Model 2.
\par 
By $n_k\sim m_k$ we mean that $n_k/m_k \to 1$ as $k\to\infty$. In this paper, ``the exact asymptotic of $\pi$'' means deriving an asymptotic expression for $\pi(k,\sigma)$ (Model 1) or $\pi(k,y,\sigma)$ (Model 2), that is, deriving an expression of the form 
$\pi(k,\sigma)\sim f_k$ or $\pi(k,y,\sigma)\sim g_k$. 

\par 

\noindent It is convenient to define some constants in this place. Let $s_p=(\mu p-\lambda-\beta-\alpha)^2+4\alpha\mu p$. \par 
\noindent Define also
$$\gamma_p={2\lambda\over \lambda+\beta+\mu p+\alpha-\sqrt{s_p}}\in (0,1)$$ 
and
$$
G=\left({\lambda+\beta-\mu-\alpha+\sqrt{s_1}\over 2}+{2\alpha\beta\over\lambda+\beta-\mu-\alpha+\sqrt{s_1}}\right).
$$
\subsection{Results for Model 1}
Our first result is following.
\begin{prop}\label{prop:main}
 
Assume that (\ref{eq:stability}) holds with $p=1$. For the unreliable server system (Model 1)  we have
  
\begin{eqnarray*}
 \pi(k,Up)& \sim & C^{(1)}(Up)   \gamma_1^k , \\[6pt]
 \pi(k,Down)& \sim & C^{(1)}(Down)  \gamma_1^k ,  
\end{eqnarray*}
where
$$C^{(1)}(Up)={\eta^{(1)} \over \tilde{d}^{(1)}} {1\over G} {\lambda+\beta-\mu-\alpha+\sqrt{s_1}\over 2}\neq 0,\quad 
C^{(1)}(Down)={\eta^{(1)}\over \tilde{d}^{(1)}} {\alpha\over G}\neq 0 , $$
$\tilde{d}^{(1)}$ is defined in (\ref{eq:drift_d}) and $\eta^{(1)}$  is equal to $\eta$ in (\ref{eq:cons_eta}) defined for appropriate process.

\end{prop}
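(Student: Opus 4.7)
The plan is to apply the Markov Additive Process framework of \cite{AdaFolMcD08} to $\mathbf{Z}^{(1)}$, viewed as a MAP with level $X^{(1)}(t)\in\mathbb{N}$ and finite phase $\sigma(t)\in\{U,D\}$. Away from the boundary $\{X=0\}$ the level-change dynamics are encoded by three $2\times 2$ rate matrices: $A_{-1}$ containing the service rate $\mu$ in phase $U$ only, $A_0$ containing the phase transitions at rates $\alpha,\beta$ together with the negative row-sums on the diagonal, and $A_1$ containing the arrival rate $\lambda$ on the diagonal. Substituting the geometric ansatz $\pi(k,\sigma)=C(\sigma)\gamma^k$ into the internal balance equations leads to the left eigenvector condition
\[
\bigl(C(U),\,C(D)\bigr)\,F(\gamma)=0,\qquad F(\gamma)=\gamma^{-1}A_{-1}+A_0+\gamma A_1.
\]

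The decay rate $\gamma_1\in(0,1)$ is then identified as the value of $\gamma$ for which $F(\gamma)$ is singular and admits a strictly positive null vector. Writing out $\det F(\gamma)=0$ gives
\[
\bigl(\lambda/\gamma+\mu\gamma-(\lambda+\mu+\alpha)\bigr)\bigl(\lambda/\gamma-(\lambda+\beta)\bigr)=\alpha\beta,
\]
a biquadratic whose relevant root in $(0,1)$ is $\gamma_1=2\lambda/(\lambda+\beta+\mu+\alpha-\sqrt{s_1})$; stability (\ref{eq:stability}) with $p=1$ is the ingredient that forces $\gamma_1<1$. A direct row-reduction of $F(\gamma_1)$ produces, up to scaling, the left null vector $\eta\propto(u,\beta)$ and right null vector $h\propto(u,\alpha)$, where $u=(\lambda+\beta-\mu-\alpha+\sqrt{s_1})/2$; one checks $\langle\eta,h\rangle=u^2+\alpha\beta=uG$, which is where the constant $G$ enters.

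To upgrade this eigenvector data into a genuine asymptotic, I would follow \cite{AdaFolMcD08} and perform the $h$-transform of the MAP with harmonic function $h$, producing a twisted process whose stationary phase distribution is proportional to $\eta(\sigma)h(\sigma)$ and whose level drift $\tilde{d}^{(1)}$, given by (\ref{eq:drift_d}), is strictly positive. The twisted chain drifts to $+\infty$ in level, and a renewal / Feynman--Kac argument applied to its first passage structure, plus undoing the exponential tilt, yields
\[
\pi(k,\sigma)\sim\frac{\eta^{(1)}}{\tilde{d}^{(1)}}\,\frac{h(\sigma)}{G}\,\gamma_1^{\,k},
\]
where $\eta^{(1)}$, defined in (\ref{eq:cons_eta}), is the boundary constant encoding the influence of the level-$0$ transitions. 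Substituting $h(U)=u$ and $h(D)=\alpha$ recovers the stated formulas for $C^{(1)}(U)$ and $C^{(1)}(D)$; in particular both are nonzero, as $u>0$ under (\ref{eq:stability}) and $\alpha,\beta>0$.

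The main obstacle is to verify that the hypotheses of \cite{AdaFolMcD08} hold in this setting: irreducibility and aperiodicity of the MAP, strict positivity of $h$, positivity of $\tilde{d}^{(1)}$, and the matching of the boundary constant with $\eta^{(1)}$ as defined in (\ref{eq:cons_eta}). Strict positivity of $h$ and $\eta$ is immediate from $\alpha,\beta>0$ and $u>0$; the drift $\tilde{d}^{(1)}>0$ follows from differentiating the Perron eigenvalue of $F(\gamma)$ at $\gamma_1$, using that $\gamma=1$ also satisfies the unstable version of the eigenvalue relation and that (\ref{eq:stability}) selects $\gamma_1$ as the strictly smaller root. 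The remaining work is the technical identification of $\eta^{(1)}$ with the abstract boundary constant of the general renewal-theoretic framework, which is routine once the MA structure has been laid out.
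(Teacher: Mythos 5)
Your proposal is correct and follows essentially the same route as the paper: identify the decay rate $\gamma_1$ from the singularity of the matrix polynomial (which is the same computation as setting the Perron eigenvalue of the paper's Feynman--Kac kernel equal to $1$), form the harmonic function and the $h$-transformed (twisted) free process, and apply the result of \cite{AdaFolMcD08} with the twisted stationary phase distribution, horizontal drift $\tilde{d}^{(1)}$ and boundary constant $\eta^{(1)}$, the summability condition over $\triangle$ being automatic since the boundary has only two states. Two cosmetic slips worth noting: your displayed $F(\gamma)$ has $A_{1}$ and $A_{-1}$ in interchanged positions relative to the determinant equation you actually use, and the null vectors are swapped --- with $u=(\lambda+\beta-\mu-\alpha+\sqrt{s_1})/2$ the right (harmonic) vector is $\propto(u,\beta)$ and the left one $\propto(u,\alpha)$ --- although your final constants still come out right, because the asymptotic constant is $\varphi(\sigma)/h(\sigma)$, which is proportional to the left vector.
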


\noindent
{\bf Remark: Comparison with standard $M/M/1$}. 
Consider    standard $M/M/1$ queue with arrival and service intensities $\lambda_0, \mu_0$ given by   $\lambda_0=\lambda,\ \mu_0={\beta\over\alpha+\beta}\mu$,  i.e.   both systems have the same   effective rates. We can compare the behaviour of both system for large number of customers $k$. For the $M/M/1$ system, the stationary distribution $\pi_0$ is known exactly:
$$\pi_0(l)=\pi_0(0)\left({\lambda_0\over\mu_0}\right)^l={\mu_0\over \mu_0-\lambda_0}\left({\lambda_0\over\mu_0}\right)^l=
{\beta\mu\over\beta\mu-(\alpha+\beta)\lambda} \left({\alpha+\beta\over \beta}\cdot{\lambda\over\mu}\right)^l.$$
Elementary calculation shows that under  (\ref{eq:stability}) with $p=1$
$$\gamma_1={2\lambda\over \lambda+\beta+\alpha+\mu-\sqrt{s_1}} \geq {\alpha+\beta\over \beta}\cdot{\lambda\over\mu}.$$
It means for big $k$, that $\pi_0$ is stochastically greater than $\pi$. We also note  that for $\pi_0$ only the ratio of $\alpha+\beta$ and $\beta$ matters, but this is not true  for $\pi$.
 

\medskip\par
\noindent

\medskip\par
\noindent
{\bf Remark: Limits as the   breaking  probability  $\alpha $ goes to 0.}\label{remark:constants}
The limit of $\gamma_1$ as $\alpha\to 0$ has twofold nature. It depends on the sign of the difference $\mu-(\lambda+\beta)$:
$$\lim_{\alpha\to 0}\gamma_1={2\lambda\over \beta+\mu+\lambda-\sqrt{(\mu-\lambda-\beta)^2}}=
\left\{
\begin{array}{lcl}
  \displaystyle {\lambda\over\mu} & \mathrm{if} & \mu<\lambda+\beta.\\
 \displaystyle {\lambda\over\lambda+\beta} & \mathrm{if} & \mu>\lambda+\beta,\\[11pt]
\end{array}
\right.
$$
Thus, to calculate  the limits of the constants $C^{(1)}(Up)$ and $C^{(1)}(Down)$ as $\alpha\to 0$, we  consider two cases separately:\par\noindent
\begin{itemize}
 \item $\mu<\lambda+\beta$
\begin{equation}\label{eq:constants_less}
\begin{array}{lll}
\displaystyle \lim_{\alpha\to 0} G= \lambda+\beta-\mu, & \quad & \displaystyle \lim_{\alpha\to 0} \tilde{d}^{(1)}= {\mu-\lambda\over C}, \\[7pt]
\displaystyle \lim_{\alpha\to 0} C^{(1)}(Up) = {\eta^{(1)} C\over \mu-\lambda},  & \quad & \displaystyle \lim_{\alpha\to 0} C^{(1)}(Down)=0 . 
\end{array}
\end{equation}

\item $\mu>\lambda+\beta$
$$
\begin{array}{lll}
\displaystyle \lim_{\alpha\to 0} G= {\beta(\mu-\lambda-\beta)\over\lambda+\beta}, & \quad & \displaystyle \lim_{\alpha\to 0} \tilde{d}^{(1)}= {\lambda+\beta\over C}, \\[7pt]
\displaystyle \lim_{\alpha\to 0} C^{(1)}(Up) =0,  & \quad & \displaystyle \lim_{\alpha\to 0} C^{(1)}(Down)=0. 
\end{array}
$$
\end{itemize}
Of course, from Proposition \ref{prop:main} we always have:
$$\lim_{\alpha\to 0}\lim_{k\to\infty} {\pi(k,Up)\over C^{(1)}(Up) \gamma_1^k}=1.$$
Furthermore, if $\mu<\lambda+\beta$, then via (\ref{eq:constants_less}) we have
\begin{corr}
Assume (\ref{eq:stability}) with $p=1$ and $\mu<\lambda+\beta$. Then for Model 1 we have
$$\lim_{k\to\infty} \lim_{\alpha\to 0} {\pi(k,Up)\over C^{(1)}(Up) \gamma_1^k}=1.$$
\end{corr}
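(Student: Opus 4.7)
The idea is to observe that the inner limit $\lim_{\alpha\to 0}\pi(k,Up)$ can be computed explicitly by a purely probabilistic argument (the limiting process is a standard $M/M/1$), and then the outer limit in $k$ becomes a simple verification that the constants match. So the plan is to evaluate numerator and denominator separately as functions of $k$ alone, and check that their ratio is identically $1$.

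For the numerator, I would argue as follows. At $\alpha=0$ the generator of $\mathbf{Z}^{(1)}$ has no transition from $Up$ to $Down$, so the set $\{(k,Down):k\in\mathbb{N}\}$ is transient: from any $Down$-state the chain jumps to $Up$ at rate $\beta$ and never returns. Hence the unique stationary distribution of the $\alpha=0$ system is supported on the $Up$-states, and on these states it coincides with the stationary distribution of the $M/M/1$ queue with rates $\lambda,\mu$ (which is stable by (\ref{eq:stability}) with $p=1$ since $\alpha\to 0$ forces $\lambda<\mu$). Thus
\[
\lim_{\alpha\to 0}\pi(k,Up)=\Bigl(1-\frac{\lambda}{\mu}\Bigr)\Bigl(\frac{\lambda}{\mu}\Bigr)^{k}=\frac{\mu-\lambda}{\mu}\Bigl(\frac{\lambda}{\mu}\Bigr)^{k}.
\]
A small technical point is that one really wants pointwise convergence $\pi_\alpha(k,Up)\to\pi_0(k,Up)$ for each fixed $k$, not merely convergence of the generators; this follows from continuity of the stationary distribution of the truncated (finite-state) chain in $\alpha$ together with the uniform tightness provided by the stability bound (\ref{eq:stability}), so the interchange of $\alpha\to 0$ with the stationary equation is justified.

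For the denominator, I use the formulas already collected in the ``Limits as $\alpha\to 0$'' remark. Since $\mu<\lambda+\beta$, the display (\ref{eq:constants_less}) gives $\lim_{\alpha\to 0}\gamma_1=\lambda/\mu$ and $\lim_{\alpha\to 0}C^{(1)}(Up)=\eta^{(1)}C/(\mu-\lambda)$, where $\eta^{(1)}$ is the normalizing constant supplied by (\ref{eq:cons_eta}) and $C$ is the constant appearing in the limit of $\tilde d^{(1)}$. Substituting,
\[
\lim_{\alpha\to 0}C^{(1)}(Up)\gamma_1^{k}=\frac{\eta^{(1)}C}{\mu-\lambda}\Bigl(\frac{\lambda}{\mu}\Bigr)^{k}.
\]
Combining with the previous display the ratio is independent of $k$ and equals $(\mu-\lambda)^{2}/(\mu\,\eta^{(1)}C)$, so the corollary reduces to verifying the single identity $\eta^{(1)}C=(\mu-\lambda)^{2}/\mu$.

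The main obstacle, therefore, is this last identity: it must be checked by unwinding the definitions of $\eta^{(1)}$ in (\ref{eq:cons_eta}) and of the normalization $C$ used in the $h$-transform/Feynman--Kac construction of Adan--Foley--McDonald, and evaluating both at $\alpha=0$. Once the two explicit expressions are written down and the limits taken as $\alpha\to 0$ under the hypothesis $\mu<\lambda+\beta$, the product telescopes to $(\mu-\lambda)^{2}/\mu$. Every other step is either the elementary probabilistic reduction to $M/M/1$ at $\alpha=0$ or a direct substitution into the formulas already recorded before the statement of the corollary.
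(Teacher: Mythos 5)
Your reduction does not prove the corollary; it converts it into an identity and then asserts that identity. Under your steps 1--2 the inner limit of the ratio is constant in $k$, so the corollary becomes \emph{equivalent} to $\lim_{\alpha\to 0}\eta^{(1)}C=(\mu-\lambda)^2/\mu$, and your only argument for that is ``the product telescopes''. But $\eta^{(1)}$ is defined in (\ref{eq:cons_eta}) through three $\alpha$-dependent objects: the boundary values $\pi(0,U),\pi(0,D)$, the harmonic function $h$, and the escape probability $\mathcal{H}$ of the twisted free process. Verifying the identity therefore requires (i) showing $\mathcal{H}(0,U)\to\frac{\mu}{C}\bigl(1-\frac{\lambda}{\mu}\bigr)=\frac{\mu-\lambda}{C}$ as $\alpha\to 0$ (a gambler's-ruin computation for the twisted walk, whose up/down probabilities tend to $\mu/C$ and $\lambda/C$, plus a continuity-in-$\alpha$ argument for an infinite-horizon probability), (ii) showing $\pi(0,D)h(0,D)\mathcal{H}(0,D)\to 0$, and (iii) justifying that $\eta^{(1)}$ converges at all --- note that even quoting (\ref{eq:constants_less}) with a fixed symbol $\eta^{(1)}$ presupposes this. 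None of this is in your proposal, and since the whole content of the statement now sits in that identity, nothing has actually been established. (The identity is in fact true and consistent with your step 1, but it must be computed, not announced.) Your step 1 interchange $\pi_\alpha(k,Up)\to(1-\lambda/\mu)(\lambda/\mu)^k$ is also only sketched, though that part is repairable.

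It is also worth noting that your route demands strictly more than the corollary needs, which is why you ran into this obstacle. The paper obtains the corollary from Proposition \ref{prop:main} together with (\ref{eq:constants_less}): what matters is only that $\lim_{\alpha\to 0}C^{(1)}(Up)$ is finite and strictly positive (and that the subdominant contribution is controlled), not its exact value. Concretely, the matrix-geometric representation of Section \ref{sec:matrix_geometric} gives exactly $\pi(k,Up)=w_2\gamma_1^k+w_3\gamma^k$ with $w_2=C^{(1)}(Up)$, so the ratio in the corollary equals $1+(w_3/w_2)(\gamma/\gamma_1)^k$; for $\mu<\lambda+\beta$ one has $\gamma/\gamma_1\to\mu/(\lambda+\beta)<1$ as $\alpha\to 0$, and with $\lim w_2>0$ and $w_3$ bounded the double limit is $1$ without ever identifying $\lim w_2$ with $1-\lambda/\mu$. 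If you want to keep your approach, you must supply the escape-probability computation and the continuity statements above; otherwise the argument along the paper's lines is both shorter and avoids the unproven identity.
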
\noindent
Note, that in this case $\lim_{\alpha\to 0}\gamma_1={\lambda\over \mu}$, thus $({\lambda\over\mu})^k$ is the asymptotic for the ``limiting system''.  However, if $\mu>\lambda+\beta$, then $\lim_{\alpha\to 0} \gamma_1={\lambda\over\lambda+\beta}$, but $({\lambda\over\lambda+\beta})^k$ is not a correct asymptotic, because  both constants $C^{(1)}(Up)$ and $C^{(1)}(Down)$ have limits 0. In this case the asymptotic for the ``limiting system'' cannot be recovered from Proposition \ref{prop:main}. \par 
\noindent 
However, using matrix geometric approach, we  have the following result.
\begin{prop}\label{prop:main_geometric}
Assume (\ref{eq:stability}) with $p=1$ and $\mu>\lambda+\beta$. Then for Model 1 we have
$$\pi(k,Up)\sim C^{(1)}(Up)\gamma_1^k + C(Up)\gamma^k$$ 
and
$$\lim_{k\to\infty} \lim_{\alpha\to 0}  {\pi(k,Up)\over C(Up)\gamma^k}=1,$$
where $$\gamma={2\lambda\over \lambda+\beta+\mu+\alpha+\sqrt{s_1}},\qquad C(Up)>0.$$
\end{prop}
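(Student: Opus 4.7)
My plan is to exploit the quasi-birth-death (QBD) structure of $\mathbf{Z}^{(1)}$, where the level is the queue length $k\in\mathbb{N}$ and the phase is $\sigma\in\{U,D\}$. Ordering the phases as $U,D$ within each level, the generator is block-tridiagonal with forward block $A_0=\lambda I_2$, backward block $A_2=\operatorname{diag}(\mu,0)$ (service is possible only in state $U$), and intra-level block
\[
A_1=\begin{pmatrix} -(\lambda+\mu+\alpha) & \alpha \\ \beta & -(\lambda+\beta) \end{pmatrix},
\]
plus a modified boundary block $B_0$ at level $0$. Under (\ref{eq:stability}) with $p=1$, Neuts' theorem ensures $\pi_k=\pi_0 R^k$ for $k\ge 0$, where $R$ is the minimal non-negative solution of $A_0+RA_1+R^2A_2=0$. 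Its eigenvalues are the roots of $\det(A_0+zA_1+z^2A_2)=0$ in $(0,1)$. Expanding the determinant gives a cubic of which $z=1$ is always a root (excluded by stability); factoring out $(z-1)$ yields
\[
\mu(\lambda+\beta)\,z^2-\lambda(\lambda+\beta+\mu+\alpha)\,z+\lambda^2=0,
\]
whose two roots are precisely $\gamma_1$ and $\gamma$ as defined in the paper. Since $s_1>0$ the roots are distinct, so $R$ is diagonalizable: $R=P\operatorname{diag}(\gamma_1,\gamma)P^{-1}$.

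Writing $\pi_0 P=(a,b)$ and letting $v_1,v_2$ be the rows of $P^{-1}$ (i.e.\ the left eigenvectors of $R$), the identity $\pi_k=\pi_0 R^k$ yields the exact expansion
\[
\pi(k,U)=a\,v_1(U)\,\gamma_1^k+b\,v_2(U)\,\gamma^k,\qquad k\ge 0.
\]
Comparing with Proposition~\ref{prop:main} forces $a\,v_1(U)=C^{(1)}(U)$, and I set $C(U):=b\,v_2(U)$. To verify $C(U)>0$, I would compute $v_2$ and $b$ in closed form from the boundary balance $\pi_0(B_0+RA_2)=0$ together with normalization, using $\mu>\lambda+\beta$ to rule out cancellation. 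A sanity check comes from the $\alpha\to0$ limit, in which $\mathbf{Z}^{(1)}$ collapses to a stable $M/M/1$ queue with server permanently in $U$: then $\pi(k,U)\to (1-\lambda/\mu)(\lambda/\mu)^k$; since in the same limit $\gamma\to\lambda/\mu$ and $C^{(1)}(U)\to0$ by the computation already noted in (\ref{eq:constants_less}), one is forced to have $\lim_{\alpha\to0}C(U)=1-\lambda/\mu>0$.

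The iterated-limit statement then follows directly from the exact two-term expansion. Dividing by $C(U)\gamma^k$ gives
\[
\frac{\pi(k,U)}{C(U)\,\gamma^k}=\frac{C^{(1)}(U)}{C(U)}\!\left(\frac{\gamma_1}{\gamma}\right)^{\!k}+1.
\]
As $\alpha\to0$ the ratio $C^{(1)}(U)/C(U)\to 0$, while $(\gamma_1/\gamma)^k\to(\mu/(\lambda+\beta))^k$ is finite for each fixed $k$, so the first summand vanishes; the subsequent limit $k\to\infty$ is trivial. The order of limits is essential, because $\gamma_1>\gamma$ would send the first summand to $+\infty$ if one took $k\to\infty$ first.

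I expect the main obstacle to be the fully explicit proof that $C(U)>0$ throughout the parameter region defined by $\mu>\lambda+\beta$ and (\ref{eq:stability}), not just for small $\alpha$. This requires writing the left eigenvectors of $R$ and the boundary vector $\pi_0$ in closed form and performing a sign analysis that genuinely uses the hypothesis $\mu>\lambda+\beta$ to prevent cancellation between the two geometric terms, a manipulation that is somewhat delicate because of the ubiquitous $\sqrt{s_1}$.
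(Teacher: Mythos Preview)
Your approach is essentially the same as the paper's: both invoke the matrix-geometric (QBD) representation $\pi_k=\pi_0 R^k$ from Neuts, identify the two eigenvalues of $R$ as $\gamma_1$ and $\gamma$, and then pass to the $\alpha\to 0$ limit. The paper works with the uniformized discrete-time kernel and writes down $R$ explicitly (then reads off its eigenvalues), whereas you stay in continuous time and extract the eigenvalues directly from $\det(A_0+zA_1+z^2A_2)=0$; these are equivalent routes to the same quadratic. One small slip: your reference to (\ref{eq:constants_less}) is to the case $\mu<\lambda+\beta$; the vanishing $C^{(1)}(Up)\to 0$ you actually need is stated in the \emph{other} bullet of that Remark (the $\mu>\lambda+\beta$ case). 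As for your declared ``main obstacle'' of proving $C(Up)>0$ throughout the region rather than only near $\alpha=0$: the paper does not resolve this either---it argues only that $\lim_{\alpha\to 0} w_3>0$ on the grounds that $w_2\to 0$ and both coefficients cannot vanish in the limit---so you are not missing anything relative to the paper's own level of rigor.
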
\noindent
Note, that $\gamma$ and $\gamma_1$ differ only by the sign at $\sqrt{s_1}$ and that for $\mu>\lambda+\beta$  we also have $  \lim_{\alpha\to 0}\gamma={\lambda\over \mu}$, so that the asymptotic for small $\alpha$ is still $({\lambda\over\mu})^k$ (we do not have any information about constant a $C(Up)$).
\medskip\par\noindent  
\par\noindent
{\bf Remark: Large deviation path.} Propositions \ref{prop:main} and \ref{prop:main_geometric} suggest two different large deviations paths for small $\alpha$.
The way a large deviation path appears depends on the sign of the difference $\mu-(\lambda+\beta)$:
\begin{itemize}\label{desc:two_ldp}
 \item For $\mu<\lambda+\beta$  the most probable path leading to a big queue is to be more often in the $Up$ status, and to accumulate a lot of customers, because service rate is not big enough. This is exactly the way it appears in standard M/M/1 queue.
\item For $\mu>\lambda+\beta$ the service rate $\mu$ is big enough, so that large deviation path does not appear in the standard way: in this case the most probable situation is, that a lot of customers join the queue, when the server is almost entirely in the  $Down$ status. 
\end{itemize}
We illustrate this behaviour in Figure \ref{fig:large_dev_path} below: The  plots are for both cases: $\mu<\lambda+\beta$ and $\mu>\lambda+\beta$; $x-$axis is the step number, $y-$axis is the number of customers, 'dot' denotes that the server was in $Up$ status and 'cross' denotes that the server was in $Down$ status. For each case there are two plots: one with steps ranging from 0 to 70000 and second with steps chosen in such the way, so that a large deviation path is well depicted. In case $\mu<\lambda+\beta$ there is also depicted a line with slope of the large deviation path given by $\tilde{d}^{(1)}$  in (\ref{eq:drift_d}).

\begin{center}
  \begin{figure}[h]	
 \begin{tabular}{c}
\includegraphics[width=370pt]{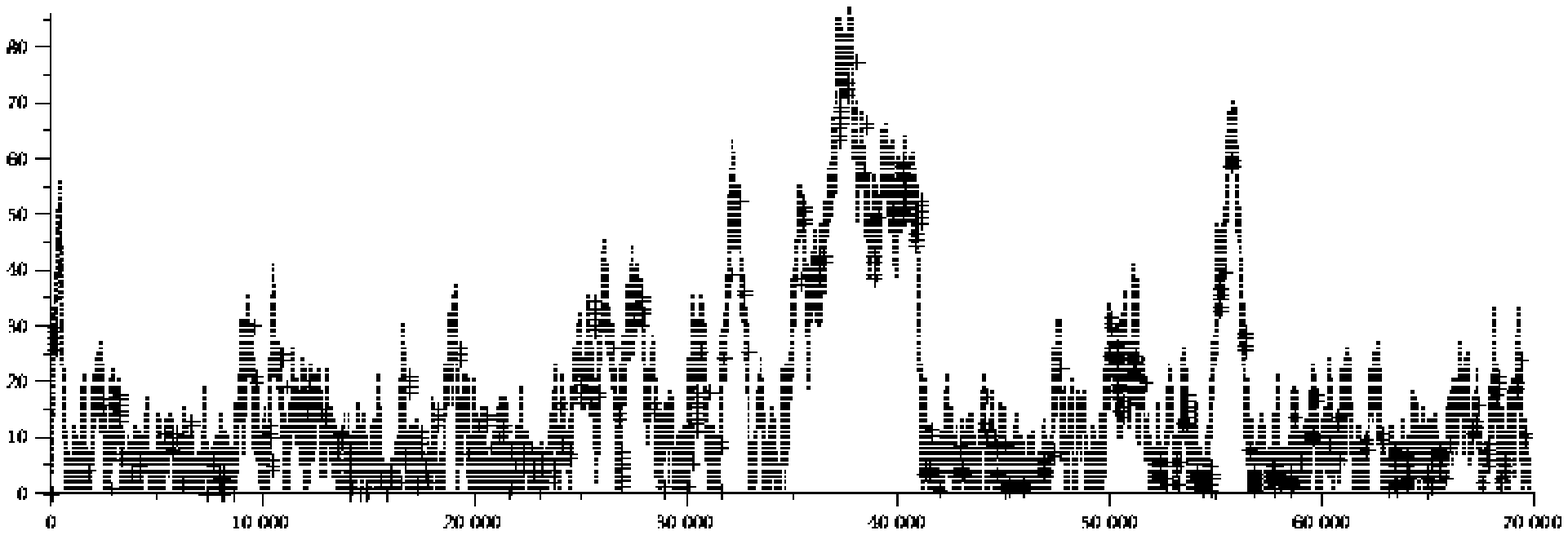}\\
\footnotesize $\mathrm{Steps:\ from\ 0 \ to \ 70000} $\\
\includegraphics[width=370pt,height=120pt]{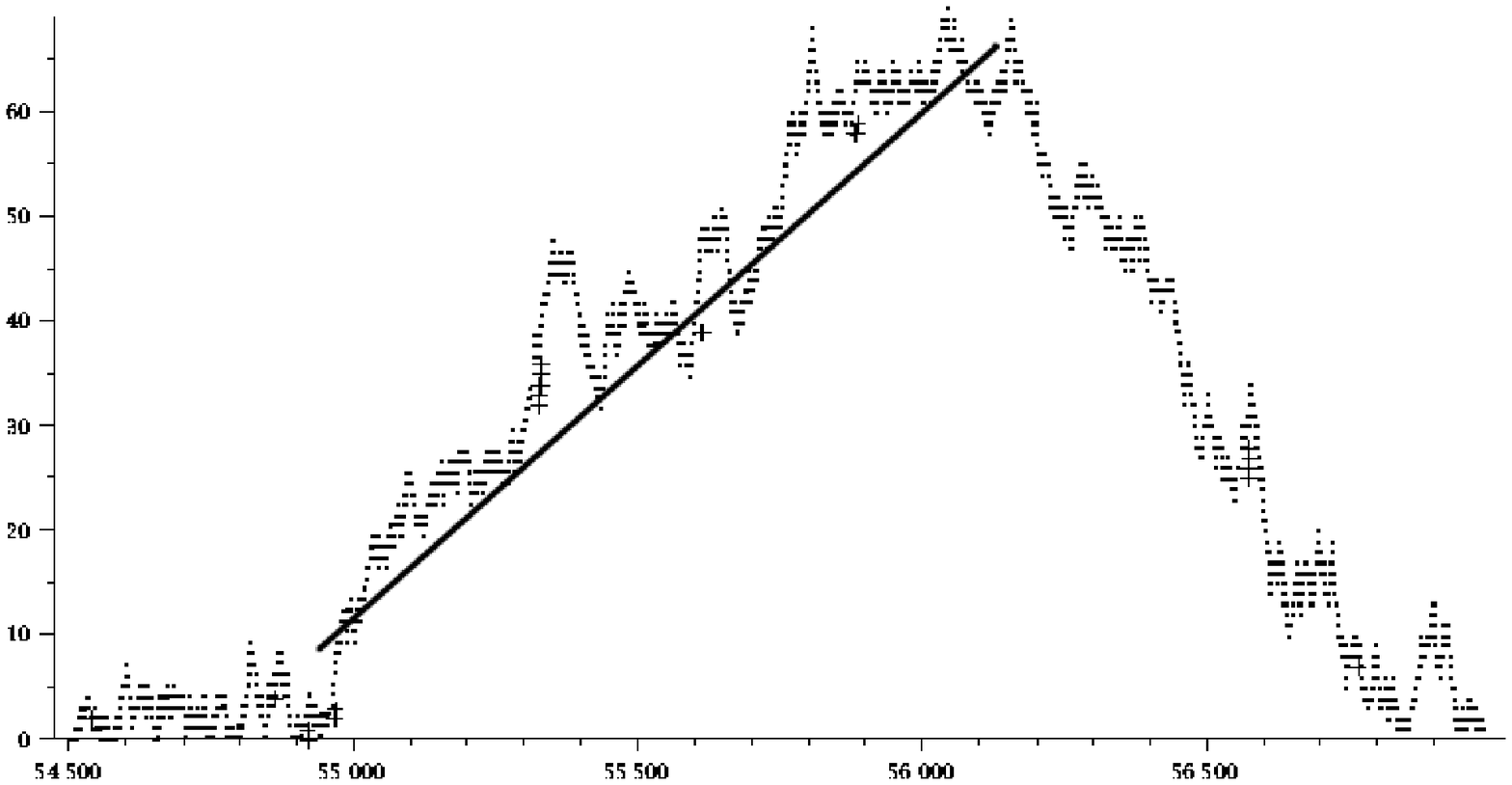}\\
\footnotesize $ \mathrm{Steps:\ from\ 54550\ to\ 57000} $\\
\small a) $\mu<\lambda+\beta:\qquad \alpha=0.1, \beta=10, \lambda=10, \mu=11. $\\
\vspace{10pt} \hspace{10pt}  \\
\includegraphics[width=370pt]{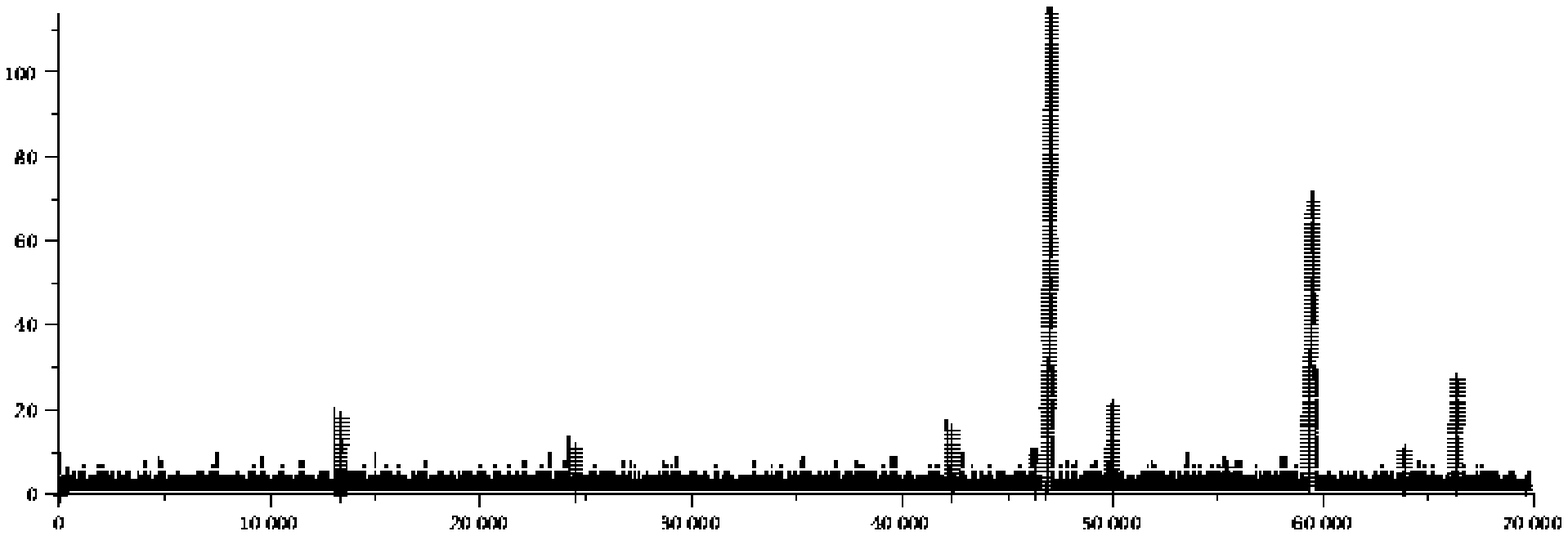} \\
\footnotesize $\mathrm{Steps:\ from\ 0 \ to \ 70000} $\\
\includegraphics[width=370pt,height=120pt]{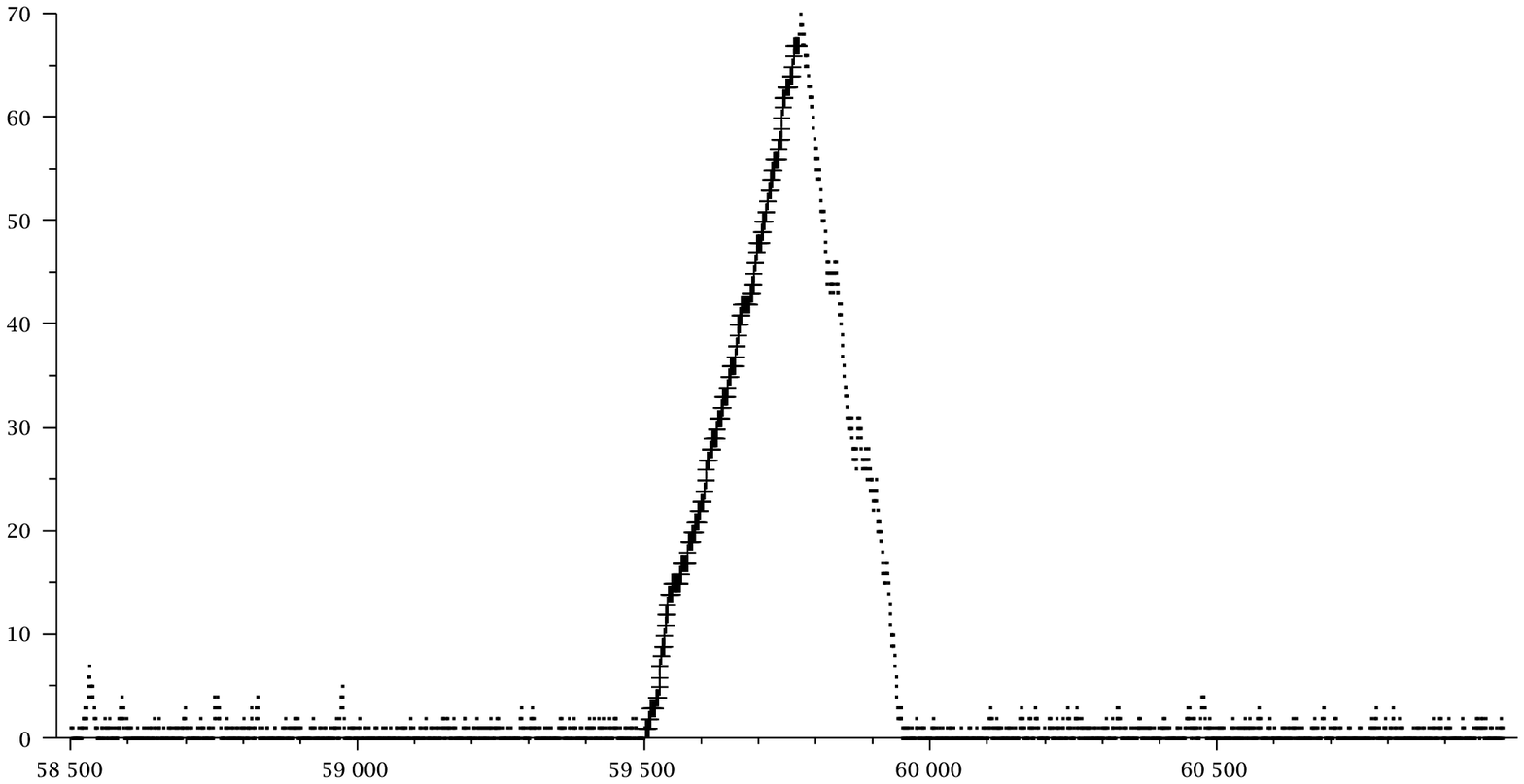} \\
\footnotesize $\mathrm{Steps:\ from\ 58500\ to\ 61000} $\\
\small b) $\mu>\lambda+\beta:\qquad \alpha=0.01, \beta=1, \lambda=20, \mu=60. $\\
\end{tabular}
\caption{Two different  large deviation paths}\label{fig:large_dev_path}
\end{figure}
 \end{center}

\subsection{Results for Model 2}
For the general Model 2 with $p\in(0,1)$ we have the following result about exact asymptotic, although we do not have knowledge about the constants.  
\par\noindent
\begin{prop}\label{prop:main2p}
 
Assume that (\ref{eq:stability}) holds. For Model 2  we have
  
\begin{eqnarray*}
 \pi(k,y,Up)& \sim & C(Up,y) \gamma_p^k, \\[6pt]
 \pi(k,y,Down)& \sim & C(Down,y)  \gamma_p^k,
\end{eqnarray*}
where $C(Up,y)>0,\ C(Down,y)>0$.
\end{prop}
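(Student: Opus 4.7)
The plan is to apply the Adan, Foley, McDonald (AFM) method to the Markov additive decomposition of $\mathbf{Z}^{(2)}$, in complete analogy with the treatment underlying Proposition \ref{prop:main} for Model 1. The key complication — and the reason only positivity and not an explicit formula for $C(y,\sigma)$ can be extracted — is that the phase space is the countable set $\mathbb{N}\times\{U,D\}$ rather than the finite set $\{U,D\}$.

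First I would take the level to be $k=X^{(2)}(t)$ and the phase to be $(y,\sigma)$, and decompose the generator as $Q=Q_{0}+Q_{+1}+Q_{-1}$, where $Q_{+1}$ contains transitions from server 2 completions ($y\to y-1$, $k\to k+1$ at rate $\mu$ when $y\ge 1$), $Q_{-1}$ contains successful departures from server 1 ($k\to k-1$ at rate $\mu p$ when $\sigma=U$, $k\ge 1$), and $Q_{0}$ collects the remaining phase-only transitions (external arrivals at rate $\lambda$, breakdown/repair at rates $\alpha,\beta$). Then I would form the Feynman-Kac kernel
\[
\widehat{Q}(\theta) = e^{\theta}Q_{+1} + e^{-\theta}Q_{-1} + Q_{0}
\]
on the phase space, and seek a strictly positive harmonic function, i.e.\ $\theta_{p}<0$ together with $h(y,\sigma)>0$ satisfying $\widehat{Q}(\theta_{p})h=0$. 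Writing $\gamma:=e^{\theta}$, the separable ansatz $h(y,\sigma)=\rho^{y}g(\sigma)$ decouples the bulk balance equations into a scalar equation relating $\rho$ to $\gamma$ (from the $y$-dynamics) and a $2\times 2$ generalized eigenvalue problem for $g$ on $\{U,D\}$ — structurally identical to the one appearing for Model 1, but with $\mu$ replaced by $\mu p$ — which produces the quadratic
\[
\mu p (\lambda+\beta)\gamma^{2} - \lambda(\lambda+\beta+\mu p+\alpha)\gamma + \lambda^{2} = 0,
\]
whose smaller positive root is exactly the $\gamma_{p}$ defined in Section \ref{sec:description}.

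Given such a pair $(\theta_{p},h)$, Doob's h-transform produces a twisted Markov chain on $\mathbb{N}\times\{U,D\}$ with modified rates $\widetilde q((y,\sigma),(y',\sigma'))=e^{\theta_{p}\delta}q((y,\sigma),(y',\sigma'))h(y',\sigma')/h(y,\sigma)$, where $\delta$ is the level jump accompanying the transition. The stability condition (\ref{eq:stability}) is precisely what is needed to guarantee that the twist reverses the drift of the level $k$ while $y$ remains tight under the twisted dynamics, so that the twisted phase chain is irreducible and positive recurrent. The AFM theorem then yields $\pi(k,y,\sigma)\sim C(y,\sigma)\gamma_{p}^{k}$, with $C(y,\sigma)$ given by a Feynman-Kac-type functional of the twisted stationary measure weighted by $h$; strict positivity of $C(U,y)$ and $C(D,y)$ follows at once from $h>0$ together with irreducibility of the twisted chain.

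The hard part will be making the spectral analysis of $\widehat{Q}(\theta)$ and, in particular, the positive-recurrence verification for the h-transformed chain rigorous on the infinite phase space $\mathbb{N}\times\{U,D\}$ — the finite-matrix arguments available for Model 1 must be replaced by genuine functional-analytic work on the separable eigenfunction $h$ and by a tightness estimate in $y$ under the twisted dynamics. This same infinite-dimensionality is precisely what obstructs a closed-form expression for $C(U,y)$ and $C(D,y)$ of the type obtained in Proposition \ref{prop:main}.
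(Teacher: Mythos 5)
Your overall route is the same as the paper's: take the level to be the queue at the unreliable server and the phase to be $(y,\sigma)$, look for a separable harmonic function of the free process, which the paper finds in Lemma \ref{lem:free_proc_harmonic2} as $h(x,y,\sigma)=(1/\gamma_p)^{x+y}e^{\theta_\sigma}$, and then twist and invoke the Adan--Foley--McDonald result (Proposition \ref{prop:mcdonald}). Two slips in your set-up: $\gamma_p$ is the \emph{larger} positive root of your quadratic in $\gamma$ (it equals $1/t_2$, with $t_2$ the smaller root in $t=e^{\theta}$), not the smaller one; and your generator decomposition leaves the feedback transitions of rate $\mu(1-p)$ (which decrease the level and increase $y$) unassigned, even though they enter the harmonic-function equations — they happen to drop out only after one imposes $e^{\theta_1}=e^{\theta_2}$.

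The genuine gap is that you never address the hypothesis of Proposition \ref{prop:mcdonald} that actually carries the weight here, namely condition (\ref{condition:pih}): $\sum_{y,\sigma}\pi(0,y,\sigma)\,h(0,y,\sigma)<\infty$. For Model 1 this is trivial (two boundary states), but for Model 2 the boundary $\triangle=\{(0,y,\sigma):y\in\mathbb{N}\}$ is infinite and $h(0,y,\sigma)$ grows geometrically like $\gamma_p^{-y}$, so this is a quantitative statement about the \emph{unknown} stationary distribution $\pi$ of the original process on the boundary. It cannot be deduced from irreducibility, tightness, or positive recurrence of the twisted phase chain — those concern the twisted free process, not $\pi$. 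The paper's proof hinges precisely on this step (Section \ref{sec:condition:pih}): it compares Model 2 with the Sauer--Daduna loss-regime network with RS-RD rerouting \cite{SauDad03}, whose stationary distribution $\pi^{(S)}$ is explicitly of product form, builds a coupling giving the stochastic domination $\pi(0,\cdot,\sigma)<_{st}\pi^{(S)}(0,\cdot,\sigma)$, and then uses monotonicity of $h$ in $y$ together with the check $\lambda/(\mu p)<\gamma_p$ to conclude finiteness. Your proposal offers no substitute for this argument, so the appeal to the AFM theorem is not justified as written. (The positive recurrence of the twisted phase chain that you flag is indeed also needed — it gives existence and positivity of $\varphi$ and hence of the constants $C(Up,y)$, $C(Down,y)$ — but it is a separate requirement and does not imply (\ref{condition:pih}).)
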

 
\noindent 
{\bf Remark: Limits as the  breaking probability  $\alpha$ goes to 0}. The limit of $\gamma_p$ as $\alpha\to 0$ has again twofold nature, it depends on the sign of the difference $\mu p-(\lambda+\beta)$.
$$\lim_{\alpha\to 0}\gamma_p={2\lambda\over \beta+\mu p+\lambda-\sqrt{(\mu p-\lambda-\beta)^2}}=
\left\{
\begin{array}{lcl}
  \displaystyle {\lambda\over\mu p} & \mathrm{if} & \mu p<\lambda+\beta.\\
 \displaystyle {\lambda\over\lambda+\beta} & \mathrm{if} & \mu p>\lambda+\beta,\\[11pt]
\end{array}
\right.
$$
Unfortunately, we do not have any information about constants $C(Up,y), C(Down,y)$. In particular, we do not know if the limits of them are positive (in Model 1 in one case the constant $C^{(1)}(Up)$ was positive, while in the other it was 0). It  means that from Proposition \ref{prop:main2p} we cannot recover the asymptotic for the ``limiting system''.  

\par\medskip 
\par\noindent 
\par  
For Model 2 with $p=1$ (which is the tandem of reliable and unreliable servers) we have the following exact asymptotic result.
\par 
\begin{prop}\label{prop:main2}
 
Assume that (\ref{eq:stability}) holds with $p=1$. For the tandem system with unreliable server 1 (i.e.  Model 2 with $p=1$) we have
  
\begin{eqnarray*}
 \pi(k,y,Up)& \sim & C^{(2)}(Up) \left({\lambda\over \mu}\right)^y \gamma_1^k, \\[6pt]
 \pi(k,y,Down)& \sim & C^{(2)}(Down) \left({\lambda\over \mu}\right)^y \gamma_1^k,
\end{eqnarray*}
where 
$$C^{(2)}(Up)={\eta^{(2)} \over \tilde{d}^{(2)}} {1\over G} {\lambda+\beta-\mu-\alpha+\sqrt{s_1}\over 2} \cdot B\neq 0,\quad C^{(2)}(Down)={\eta^{(2)} \over \tilde{d}^{(2)}}{ \alpha \over G} \cdot B \neq 0 ,\quad  $$

$$B=1-{\lambda+\beta+\mu+\alpha-\sqrt{s_1}\over 2\mu},$$
 $\tilde{d}^{(2)}$ is defined in (\ref{eq:drift_d2}) and  $\eta^{(2)}$ is equal to $\eta$ in (\ref{eq:cons_eta}) defined for appropriate process. 
\end{prop}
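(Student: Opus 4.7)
The plan is to apply the Adan--Foley--McDonald (AFM) method exactly as in the proof of Proposition \ref{prop:main} (Model 1), but with the Markov additive background enlarged to include the reliable-queue length $Y^{(2)}$.

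First, I take $X^{(2)}(t)$ as the additive coordinate and $(Y^{(2)}(t), \sigma(t))$ as the background. The free-process generator $Q^{(2)}$ (no reflection at $x=0$) has level-changing transitions $(x,y,U) \to (x-1,y,U)$ at rate $\mu$ (server 1 completion) and $(x,y,\sigma) \to (x+1,y-1,\sigma)$ at rate $\mu$ (server 2 completion, $y \geq 1$), together with the level-preserving arrival and status transitions.

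Next, I look for a positive right eigenfunction of the form $h(x, y, \sigma) = A^x B^y \Psi(\sigma)$. Imposing $(Q^{(2)}h)(x, y, \sigma) = 0$ on the interior and cancelling $A^x B^y$, the harmonicity boils down to the pair
\begin{align*}
\lambda B + \mu A / B + \mu/A + \alpha \Psi(D)/\Psi(U) &= \lambda + 2\mu + \alpha,\\
\lambda B + \mu A / B + \beta \Psi(U)/\Psi(D) &= \lambda + \mu + \beta.
\end{align*}
The choices $A = 1/\gamma_1$ and $B = \mu/\lambda$ are dictated, respectively, by the $x$-decay rate $\gamma_1$ inherited from Model 1 and by the $M/M/1$-type $y$-decay rate $\lambda/\mu$ one expects for the reliable server. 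Substituting these gives $\lambda B = \mu$, $\mu A/B = \lambda/\gamma_1$, $\mu/A = \mu\gamma_1$, and the system collapses to
\begin{align*}
\lambda/\gamma_1 + \mu\gamma_1 + \alpha \Psi(D)/\Psi(U) &= \lambda + \mu + \alpha,\\
\lambda/\gamma_1 + \beta \Psi(U)/\Psi(D) &= \lambda + \beta,
\end{align*}
which is precisely the Model 1 spectral system evaluated at $\gamma = 1/\gamma_1$. Thus $\Psi$ coincides (up to a scalar) with the Model 1 eigenvector $\psi$, and its explicit form is what produces the ratio $(\lambda+\beta-\mu-\alpha+\sqrt{s_1})/(2\alpha)$ between the $Up$ and $Down$ constants, matching Proposition \ref{prop:main}.

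Then I perform the Doob $h$-transform with this $h$. Under the transform, arrivals occur at rate $\mu$, server 2 completions at rate $\lambda/\gamma_1$ and server 1 completions (when $\sigma=U$) at rate $\mu\gamma_1$; the $X$-component of the twisted chain has positive drift $\tilde{d}^{(2)}$ as defined in (\ref{eq:drift_d2}), and the background $(y,\sigma)$ admits a quasi-stationary measure that factorises into a geometric piece in $y$ and a $\sigma$-piece proportional to $\Psi$. Applying the AFM asymptotic formula as in the proof of Proposition \ref{prop:main}, one obtains
\[
\pi(k, y, \sigma) \;\sim\; \frac{\eta^{(2)}}{\tilde{d}^{(2)}}\,\frac{\Psi(\sigma)}{G}\cdot B\left(\frac{\lambda}{\mu}\right)^y \gamma_1^k,
\]
with $B = 1 - (\lambda+\beta+\mu+\alpha-\sqrt{s_1})/(2\mu) = 1 - \lambda/(\mu\gamma_1)$ arising as the boundary/normalising constant coming from the behaviour at $y = 0$, where server 2 cannot feed server 1.

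The main obstacle will be the $h$-transform step: carefully handling the two-dimensional background in the AFM framework, verifying that the twisted quasi-stationary measure factorises as claimed, and extracting the exact form of $B$ from the $y=0$ boundary. By contrast, the reduction of the Model 2 spectral problem to the Model 1 spectral problem (via $A = 1/\gamma_1$, $B = \mu/\lambda$) is immediate, and the identification of $\eta^{(2)}$, $\tilde{d}^{(2)}$ and the $\sigma$-dependent constants $(\lambda+\beta-\mu-\alpha+\sqrt{s_1})/2$ and $\alpha$ then proceeds exactly as for Model 1.
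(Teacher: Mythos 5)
Your overall strategy (AFM with the background enlarged to $(y,\sigma)$, additive coordinate $x$) is the paper's strategy, but your candidate harmonic function is wrong, and this breaks the argument at its first real step. In the AFM set-up only the $x$-boundary is removed; the free process still lives on $\mathbb{Z}\times\mathbb{N}\times\{U,D\}$, so $h$ must satisfy the balance equation at $y=0$ as well, where server 2 has no departures. For $h(x,y,\sigma)=A^xB^y\Psi(\sigma)$ the $y=0$ equations read $\lambda B+\mu/A+\alpha\Psi(D)/\Psi(U)=\lambda+\mu+\alpha$ and $\lambda B+\beta\Psi(U)/\Psi(D)=\lambda+\beta$; subtracting them from your interior ($y\ge1$) equations forces $\mu A/B=\mu$, i.e.\ $A=B$. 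With your choice $A=1/\gamma_1$, $B=\mu/\lambda$, the $y=0$ equation for $U$ becomes $\mu\gamma_1+\alpha\Psi(D)/\Psi(U)=\lambda+\alpha$, which together with your interior equation would require $\lambda/\gamma_1=\mu$, false in general. The paper's Lemma \ref{lem:free_proc_harmonic2} is exactly this computation: harmonicity dictates $h(x,y,\sigma)\propto(1/\gamma_1)^{x+y}e^{\theta_\sigma}$, not $(1/\gamma_1)^x(\mu/\lambda)^y$. Because your $h$ is not harmonic, the ``twisted'' kernel you describe is not stochastic, and the twisted rates you quote are off: with the correct $h$ the twisted arrival rate is $\lambda/\gamma_1$ and the server-2 completion rate is $\mu$ (not the other way around), so the twisted $y$-component is positive recurrent with geometric ratio $\lambda'/\mu'=\lambda/(\mu\gamma_1)$; under your rates it would be transient and no stationary background measure $\varphi$ would exist. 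The factor $(\lambda/\mu)^y$ in the statement is not read off from $h$: it arises in the AFM formula $\pi(k,y,\sigma)\sim\eta\,\varphi(y,\sigma)/(\tilde{d}\,h(k,y,\sigma))$ as the product $(\lambda'/\mu')^y\cdot\gamma_1^{y}=(\lambda/\mu)^y$, with $B=1-\lambda'/\mu'$ appearing as the normalising constant of $\varphi$, which the paper verifies is of product form by direct balance.

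A second, smaller gap: you never verify condition (\ref{condition:pih}). For Model 1 this is trivial (two boundary states), but here $\triangle=\{(0,y,\sigma):y\in\mathbb{N}\}$ is infinite and $h$ grows in $y$ like $\gamma_1^{-y}$, so $\sum_{\triangle}\pi h<\infty$ is a genuine requirement. The paper proves it (Section \ref{sec:condition:pih}) by coupling Model 2 with the Sauer--Daduna loss-regime network, whose stationary distribution is explicit product form, obtaining $\pi(0,\cdot,\sigma)<_{st}\pi^{(S)}(0,\cdot,\sigma)$ and then checking $\lambda/(\mu p)<\gamma_p$. Some argument of this kind must be supplied before Proposition \ref{prop:mcdonald} can be invoked.
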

\noindent 
{\bf Remark: Limits as the  breaking probability  $\alpha$ goes to 0}. 
Note that $C^{(1)}(Up)$, $C^{(2)}(Up)$,  and $C^{(1)}(Down)$, $C^{(2)}(Down)$ differ only by a factor of $B$ and ${\eta^{(1)}\over\tilde{d}^{(1)}}$ or  ${\eta^{(2)}\over\tilde{d}^{(2)}}$. We can rewrite
$C^{(2)}(Up)={\eta^{(2)}\over\tilde{d}^{(2)}} {\tilde{d}^{(1)} \over \eta^{(1)}} \cdot B \cdot C^{(1)}(Up)$ and similarly with $C^{(2)}(Down)$. Moreover, $\tilde{d}^{(1)}$ and $\tilde{d}^{(2)}$ are different, but they have the same limits as $\alpha\to 0$. Thus, based on results for  Model 1 and calculating the limit of $B$, we have
two cases:\par\noindent
\begin{itemize}
 \item $\mu<\lambda+\beta$
\begin{equation*}
\begin{array}{lll}
\displaystyle \lim_{\alpha\to 0} B= 0, & \quad & \displaystyle \lim_{\alpha\to 0} \tilde{d}^{(2)}= {\mu-\lambda\over C}, \\[7pt]
\displaystyle \lim_{\alpha\to 0} C^{(1)}(Up) = {\eta^{(1)} C\over \mu-\lambda},  & \quad & \displaystyle \lim_{\alpha\to 0} C^{(1)}(Down)=0 . \\[7pt]
\displaystyle \lim_{\alpha\to 0} C^{(2)}(Up) = 0,  & \quad & \displaystyle \lim_{\alpha\to 0} C^{(2)}(Down)=0 . \\[7pt]
\end{array}
\end{equation*}

\item $\mu>\lambda+\beta$
$$
\begin{array}{lll}
\displaystyle \lim_{\alpha\to 0} B={\mu-(\alpha+\beta)\over \mu}, & \quad & \displaystyle \lim_{\alpha\to 0} \tilde{d}^{(2)}= {\lambda+\beta\over C}, \\[7pt]
\displaystyle \lim_{\alpha\to 0} C^{(1)}(Up) =0,  & \quad & \displaystyle \lim_{\alpha\to 0} C^{(1)}(Down)=0. \\[7pt]
\displaystyle \lim_{\alpha\to 0} C^{(2)}(Up) =0,  & \quad & \displaystyle \lim_{\alpha\to 0} C^{(2)}(Down)=0. 
\end{array}
$$
\end{itemize}
It means that from Proposition \ref{prop:main2} we connote recover the asymptotic for the ``limiting system''. 
For $\mu>\lambda+\beta$ it is because limits of both constants $C^{(1)}(Up)$ and $C^{(1)}(Down)$ (and therefore $C^{(2)}(Up)$ and $C^{(2)}(Down)$)  are 0. In case $\mu<\lambda+\beta$ although the limit of $C^{(1)}(Up)$ is strictly positive, the limit of $C^{(2)}(Up)$ is again 0, because of the limit of $B$.

\section{Proofs}\label{sec:proofs}
\subsection{Uniformization and stability}
We find it more convenient to work with the embedded discrete-time Markov chain. We denote its kernel by $\PP$. Of course it has the same stationary distribution $\pi$. We make uniformization by fixing some $C$ such that $C\geq \lambda+\mu+\alpha+\beta$.

\begin{lem}\label{lem:stability} Model 1 and Model 2 with $p=1$ are ergodic if and only if
$\lambda < {\beta\over \alpha+\beta} \mu. $
Moreover, condition $\lambda < {\beta\over \alpha+\beta} \mu p $ is sufficient for stability of Model 2 with $p\in(0,1)$.
\end{lem}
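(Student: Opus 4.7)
The plan is to establish the ``if and only if'' for Model~1 and for Model~2 with $p=1$ by combining a Foster--Lyapunov drift argument (sufficiency) with a strong-law-of-large-numbers argument (necessity), and to obtain the sufficiency in the case $p\in(0,1)$ by an analogous but more delicate drift computation that accounts for the feedback.

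For Model~1 I would work with the uniformized discrete-time chain $\PP$ and take as a Lyapunov function
$$V(x,\sigma)=x+g(\sigma),\qquad g(U)=0,\quad g(D)=\frac{\mu}{\alpha+\beta}.$$
The role of $g$ is to cancel the $\sigma$-dependence in the one-step drift of $x$. A direct enumeration of the uniformized transitions shows that at every interior state $x\geq 1$,
$$\E\bigl[V(X_{n+1},\sigma_{n+1})-V(X_n,\sigma_n)\mid (x,\sigma)\bigr]=\frac{1}{C}\left(\lambda-\frac{\beta\mu}{\alpha+\beta}\right),$$
independently of $\sigma\in\{U,D\}$. By (\ref{eq:stability}) with $p=1$ this is strictly negative, and only finitely many states (those with $x=0$) fall outside the decreasing regime, so Foster's criterion gives positive recurrence. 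Conversely, if $\lambda\geq \beta\mu/(\alpha+\beta)$, then the marginal $\{\sigma(t)\}$ is an autonomous two-state Markov chain with stationary fraction $\beta/(\alpha+\beta)$ on $U$; the SLLN for $\sigma$ then identifies $\beta\mu/(\alpha+\beta)$ as the long-run service capacity available to the queue, and pathwise comparison with a random walk of non-negative drift forces $X(t)\to\infty$ a.s., ruling out positive recurrence.

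For Model~2 with $p=1$ (the pure tandem) I would invoke the classical Burke decomposition. Server~2 is an M/M/1 with rates $(\lambda,\mu)$, ergodic iff $\lambda<\mu$; when ergodic, its stationary departure stream is Poisson of intensity $\lambda$. Feeding this Poisson stream into server~1 reduces that server to Model~1 with exogenous arrival rate $\lambda$, which by the previous step is ergodic iff $\lambda<\beta\mu/(\alpha+\beta)$. Since the latter condition already implies $\lambda<\mu$, the tandem is ergodic iff $\lambda<\beta\mu/(\alpha+\beta)$. For the converse, if $\lambda\geq \beta\mu/(\alpha+\beta)$, then either server~2 itself diverges (when $\lambda\geq \mu$) or server~2 is stable but server~1 diverges by the Model~1 necessity just established; in both cases the joint process is not positive recurrent.

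For Model~2 with $p\in(0,1)$ I would again apply Foster to the uniformized chain, but now with a weighted linear Lyapunov function
$$V(x,y,\sigma)=ax+by+g(\sigma),$$
where $a,b>0$ and $g$ are determined by the requirement that the one-step drift be $\sigma$-independent and strictly negative at each type of state: interior $(x,y\geq 1)$, boundary $\{y=0,\ x\geq 1\}$, and boundary $\{x=0,\ y\geq 1\}$. The $\sigma$-independence pins down $g(D)-g(U)$ as in Model~1; tracking the resulting drift then gives a two-sided inequality on the ratio $a/b$ whose interior and boundary requirements are consistent precisely when $\lambda<\beta\mu p/(\alpha+\beta)$, i.e.\ when the exogenous arrival rate sits below the effective bottleneck capacity of the unreliable station (once feedback is folded into the per-customer work). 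The hard part, and the reason only sufficiency is claimed, will be the stratum $\{x=0\}$: there the server~1 service transition is disabled while the server~2$\to$server~1 transition still increases $x$, so the weights $a,b$ must be tuned carefully to keep the drift negative on this stratum, and this tuning becomes feasible exactly at the stated threshold but does not lend itself to a simple necessity argument because feedback-induced correlations prevent a clean Burke-type decomposition.
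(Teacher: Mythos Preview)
Your Model~1 argument is correct but takes a different route from the paper. The paper casts $\PP$ as a level-independent QBD with blocks $\PP_0,\PP_1,\PP_2$ and applies Neuts' mean-drift criterion: the inter-level generator $\mathbf{G}=\PP_0+\PP_1+\PP_2$ has stationary vector $\rho=\bigl(\tfrac{\beta}{\alpha+\beta},\tfrac{\alpha}{\alpha+\beta}\bigr)$, and positive recurrence holds iff $\rho\PP_0\mathbf{1}<\rho\PP_2\mathbf{1}$, which is exactly $\lambda<\tfrac{\beta}{\alpha+\beta}\mu$. Your Foster drift with $V(x,\sigma)=x+g(\sigma)$ and your SLLN necessity argument are valid and more self-contained; Neuts' criterion delivers necessity automatically, whereas you must still treat the critical case $\lambda=\tfrac{\beta}{\alpha+\beta}\mu$ by hand. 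For Model~2 with $p=1$ you and the paper both reduce to Model~1 via Burke's theorem.

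For Model~2 with $p\in(0,1)$, however, your linear function $V=ax+by+g(\sigma)$ does \emph{not} reach the claimed threshold. Computing the one-step drifts (times $C$) on the three infinite strata $\{y=0,\sigma=D\}$, $\{y=0,\sigma=U\}$ and $\{x=0,\sigma=U\}$ --- namely $\lambda b-\beta\Delta g$, $[\lambda+\mu(1-p)]b-\mu a+\alpha\Delta g$ and $\lambda b+\mu(a-b)+\alpha\Delta g$, with $\Delta g=g(D)-g(U)$ --- and requiring all three to be negative forces, after eliminating $\Delta g$,
\[
1-p+\frac{(\alpha+\beta)\lambda}{\beta\mu}\;<\;\frac{a}{b}\;<\;1-\frac{(\alpha+\beta)\lambda}{\beta\mu},
\]
which is nonempty only when $\lambda<\tfrac{\beta\mu p}{2(\alpha+\beta)}$, a factor of two short. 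So the assertion that ``this tuning becomes feasible exactly at the stated threshold'' fails for a purely linear $V$; you would need a piecewise-linear or quadratic Lyapunov function (or a fluid-limit argument) to close the gap. The paper avoids drift analysis altogether here: it forward-references the harmonic function of the free process derived in Section~\ref{sec:Model2_harmonic_fun} and observes that the decay rate $\gamma_p$ of Proposition~\ref{prop:main2p} satisfies $\gamma_p<1$ iff $\lambda<\tfrac{\beta}{\alpha+\beta}\mu p$, whence the stationary measure is summable.
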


\begin{proof}
\par \noindent
\begin{itemize}
 \item  Model 1:
\par\noindent
If we order states in the following way
$$(0,U)\prec (0,D)\prec (1,U)\prec (1,D) \prec (2,U)\prec (2,D)\prec \ldots$$
we can rewrite
\begin{equation}\label{eq:matrixP}
\PP=\left(
\begin{array}{ccccc}
\mathbf{P}_1^{(0)} & \mathbf{P}_0 & & & \\
\mathbf{P}_2 & \mathbf{P}_1 & \mathbf{P}_0 & & \\
&\mathbf{P}_2 & \mathbf{P}_1 & \mathbf{P}_0 &  \\
& & \ddots & \ddots &\ddots\\
\end{array}
\right),
\end{equation}
where
\begin{equation}\label{eq:neuts_matrices}
\PP_1^{(0)}=\left(
\begin{array}{cc}
1-{\alpha+\lambda \over C} & {\alpha\over C} \\[5pt]
{\beta\over C} &  1-{\lambda+\beta\over C} \\
\end{array}\right)
,\ 
\PP_0=\left(
\begin{array}{cc}
{\lambda\over C} & 0 \\[5pt]
0 &  {\lambda\over C} \\
\end{array}\right)
,$$
$$
\PP_2=\left(
\begin{array}{cc}
{\mu \over C} & 0 \\[5pt]
0 &  0 \\
\end{array}\right)
,\
\PP_1=\left(
\begin{array}{cc}
1-{\mu+\lambda+\alpha\over C} & {\alpha\over C}\\[5pt]
{\beta\over C} & 1-{\lambda+\beta\over C} \\
\end{array}\right).
 \end{equation}
Therefore $\PP$ is {\sl quasi-birth-and-death process} (QBD process) with inter-level generator
$$\mathbf{G}=\PP_0+\PP_1+\PP_2=\left(
\begin{array}{cc}
1-{\alpha\over C} & {\alpha\over C} \\
 {\beta\over C} &  1-{\beta\over C}\\
\end{array}\right).
$$
From Neuts \cite{Neu_book} (Theorem 3.1.1), we have that if inter-level generator matrix $\mathbf{G}$ is irreducible, then
the process is positive recurrent if and only if
$$\rho \cdot \PP_0 \cdot
\left(\begin{array}{c} 1 \\ 1\end{array}\right)
<
\rho \cdot \PP_2 \cdot
\left(\begin{array}{c} 1 \\ 1\end{array}\right),
$$
where $\rho$ is the stationary probability vector of $\mathbf{G}$.
\par
We have $\rho=\left( {\beta\over\alpha+\beta},{\alpha\over\alpha+\beta}\right)$,
$\rho \cdot \PP_0 \cdot
\left(\begin{array}{c} 1 \\ 1\end{array}\right) =\lambda\cdot {1\over C} $
and
$
\rho \cdot \PP_2 \cdot
\left(\begin{array}{c} 1 \\ 1\end{array}\right) ={\beta\over \alpha+\beta}\mu\cdot {1\over C}$ which finishes the proof.

\item Model 2 with $p=1$:
\par\noindent
The server 2 is  stable if and only if $\lambda<\mu$. The output of server 2 is the Poisson process with intensity $\lambda$ (Burke's Theorem, see Burke \cite{Bur56} for details). In previous case we proved that unreliable server 1 with arrival rate $\lambda$ and service rate $\mu$ is stable if and only if $\lambda < {\beta\over \alpha+\beta} \mu$. Of course the second condition implies first.

\item Model 2 with $p\in(0,1)$: 
\par\noindent
Later, in Section  \ref{sec:Model2_harmonic_fun}, the harmonic function of the (so-called) free process is derived. By Proposition \ref{prop:mcdonald} it gives the following  asymptotic (actually this was given in Proposition \ref{prop:main2p}), for any $k\in \mathbb{N}$ and $\sigma\in\{U,D\}$
$$\pi(k,y,\sigma)\sim C(\sigma,y)\gamma_p^k.$$
It is enough to show, that $\lambda<{\beta\over \alpha+\beta}\mu p $ implies $\sum_{k}\pi(k,y,\sigma)<1$, or equivalently, that $\gamma_p<1$. It can be easily checked, that $\gamma_p<1$ if and only if $\lambda<{\beta\over\alpha+\beta}\mu p$, thus this is a sufficient condition.
\end{itemize}
\end{proof}

\noindent {\bf Remark}. Consider system similar to Model 2, but with 2 reliable servers (i.e. standard Jackson network) with service rate at server 2: $\mu_2=\mu$ and service rate at server 1: $\mu_1={\beta\over\alpha+\beta}\mu  $ (which is the effective service rate of the unreliable server). Then, solving traffic equation and using standard stability conditions for Jackson networks, we have that the system is stable if and only if ${\lambda\over \mu_1 p}<1$, i.e. $\lambda<{\beta\over\alpha+\beta}\mu p$. It suggests that (\ref{eq:stability}) is the necessary stability condition for Model 2 with $p\in(0,1)$.

\subsection{Markov Additive Structure and  result of Adan, Foley and McDonald \cite{AdaFolMcD08}}
Tools used in this paper fall into the framework of  Adan, Foley and McDonald  \cite{AdaFolMcD08}, where Markov additive structure is needed. 
Let $Z_n=(X_n, Y_n)$ be a Markov process with state space $\mathbb{Z}^k\times \mathcal{E}$, where $\mathbb{Z}=\{\ldots,-2,-1,0,1,2,\ldots\}$.
If the transitions are invariant with respect to the translations on $x\in \mathbb{Z}^k$, i.e.:
$$\PP((x,y),(x',y'))=\PP((0,y),(x'-x,y'))\qquad \mathrm{for \ all \ } x, x'\in\mathbb{Z}^k \textrm{ \ and \ } y,y'\in \mathcal{E},$$
then it is called a Markov additive process, $X_n$ is its additive part, $Y_n$ is a Markovian part. \par 
Processes     $\mathbf{Z}^{(1)}$ and $\mathbf{Z}^{(2)}$ defined in Section \ref{sec:description} are  Markov additive   if we remove the boundaries and let the transitions to be shift invariant relative to the first coordinate. Abusing notation, we denote state spaces of these processes with the same symbols, respectively,  $E^{(1)}=\mathbb{Z}\times\{U,D\}$ and $E^{(2)}=\mathbb{Z}\times\mathbb{N}\times\{U,D\}$.
\smallskip\par\noindent

By harmonic function of Markov chain with transition matrix $\PP$ we mean the  right eigenvector $h$ associated with eigenvalue 1, i.e. such that $\PP h=h$.
 From  \cite{AdaFolMcD08} we can deduce the following. 

\begin{prop}\label{prop:mcdonald}
Consider Markov process $\{X_t\}_{t\geq 0}$ with stationary distribution $\pi$ and state space $E=\{(k,A): k\in \mathbb{Z}, A\in\mathbb{Z}^n\}$. Let $\triangle\subset E$ and let $\mathbf{K}^\infty$ be the kernel of the free process, which is shift invariant relative to first coordinate. Let $$\boldsymbol{\mathcal{K}}((k,A),(k',A'))=\mathbf{K}^\infty((k,A),(k',A')) h(k',A')/h(k,A)$$ be the kernel of so-called twisted free process, where $h$ is the harmonic function of $\mathbf{K}^\infty$. If 
\begin{equation}\label{condition:pih}
\sum_{(k,A)\in\triangle} \pi(k,A) h(k,A)<\infty, 
\end{equation}
then
$$\pi(l,A)\sim {\eta \varphi(A)\over \tilde{d}\ h(l,A),}$$
where $\tilde{d}$ is the stationary horizontal drift   and 
\begin{equation}\label{eq:cons_eta}
\eta\equiv \sum_{(x',A')\in\triangle} \pi(x',A') h(x',A') \mathcal{H}(x',A'). 
\end{equation}
$\mathcal{H}(x',A')$ is the probability that twisted free process starting from $(x',A')$ never hits $(E\setminus\triangle)^C$.

\end{prop}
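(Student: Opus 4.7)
The plan is to obtain Proposition~\ref{prop:mcdonald} as a direct specialisation of the tail-asymptotic framework of Adan, Foley and McDonald~\cite{AdaFolMcD08} to our setting. Their argument rests on three ingredients: an $h$-transform (Doob change of measure) performed with respect to a harmonic function $h$ of the free process, a Markov renewal equation satisfied by $\pi h$, and the Key Renewal Theorem applied to the resulting twisted chain. Since Proposition~\ref{prop:mcdonald} is essentially a restatement of their main result in a convenient form, the task of the proof is to check that each ingredient is available and produces the constants claimed.

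First I would identify the free process, whose kernel $\mathbf{K}^\infty$ is obtained from $\PP$ by erasing the boundary transitions at $E\setminus\triangle$ and extending the dynamics in a shift-invariant way along the first coordinate. A harmonic function $h$ of $\mathbf{K}^\infty$ is sought in the product form $h(k,A)=\gamma^{-k}\varphi(A)$, where $(\gamma,\varphi)$ come from a Perron--Frobenius-type eigenvalue problem for the one-step generator of the Markovian part. Using this $h$ I would form the twisted kernel $\boldsymbol{\mathcal{K}}((k,A),(k',A'))=\mathbf{K}^\infty((k,A),(k',A'))\,h(k',A')/h(k,A)$; stochasticity is automatic because $h$ is harmonic, and a direct calculation yields both the horizontal drift $\tilde d$ and the stationary distribution $\varphi$ of the Markovian component under $\boldsymbol{\mathcal{K}}$.

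Next I would derive a Markov renewal equation for $\pi$ restricted to a half-space of the form $\{(k,A): k\geq L\}$ with $L$ large. Under $\PP$ the mass in this half-space arises from two sources: transitions from inside it, which agree with $\mathbf{K}^\infty$ because they occur strictly within $\triangle$, and transitions originating in $\triangle\cap\{k<L\}$. Multiplying the equation for $\pi h$ through by $1/h$ and exploiting the identity $\mathbf{K}^\infty(x,y)h(y)/h(x)=\boldsymbol{\mathcal{K}}(x,y)$ recasts it as the standard Markov renewal equation for the occupation measure of the twisted chain, started from the distribution on $\triangle$ with weight proportional to $\pi(\cdot)h(\cdot)\mathcal{H}(\cdot)$; the survival factor $\mathcal{H}$ appears because only those trajectories of the twisted chain that never return to $(E\setminus\triangle)^C$ contribute to the tail of $\pi$. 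Applying the Markov Key Renewal Theorem to this equation then yields $\pi(l,A)h(l,A)\to \eta\,\varphi(A)/\tilde d$ as $l\to\infty$, which after dividing by $h(l,A)$ is the claimed asymptotic.

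The main obstacle is verifying the hypotheses of the renewal theorem in the present context: aperiodicity and irreducibility of the Markovian component of the twisted chain, finiteness and strict positivity of the stationary horizontal drift $\tilde d$, and summability of the driving measure. The last point is exactly where hypothesis~(\ref{condition:pih}) enters, via
$$\eta=\sum_{(x,A)\in\triangle}\pi(x,A)\,h(x,A)\,\mathcal{H}(x,A)\;\leq\;\sum_{(x,A)\in\triangle}\pi(x,A)\,h(x,A)\;<\;\infty,$$
so that $\eta$ is a well-defined, finite normalising constant. The first two conditions will have to be checked case by case in the concrete applications (Propositions~\ref{prop:main}, \ref{prop:main2p} and~\ref{prop:main2}), where the explicit formulas for $\tilde d$ and for the eigenpair $(\gamma,\varphi)$ are produced by solving the one-step eigenvalue problem associated with the Markovian part of $\mathbf{K}^\infty$.
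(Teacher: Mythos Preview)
Your sketch is consistent with the Adan--Foley--McDonald argument and captures the three main ingredients correctly: the $h$-transform, the Markov renewal equation for $\pi h$, and the application of the Key Renewal Theorem under the twisted dynamics. In the paper itself, however, Proposition~\ref{prop:mcdonald} is not proved at all: it is simply stated as a consequence of~\cite{AdaFolMcD08} (the sentence just before the statement reads ``From~\cite{AdaFolMcD08} we can deduce the following''), and no further argument is given. So there is no paper proof to compare against beyond the citation, and your outline is an accurate summary of what that citation provides.
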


\subsection{Proof of Proposition \ref{prop:main}}
\subsubsection{The free process.}\noindent
 We have to define $\triangle\subset E^{(1)}$ and a Markov additive process embedded in original one, so that it is shift invariant outside the boundary $\triangle$.
We want the process to be additive in the first coordinate and we want the second coordinate to be the Markovian part.
Thus, as a boundary we can take $\triangle=\{(0,Up)\cup (0,Down)\}$. Let us denote the transition kernel of this process by $\mathbf{K}^\infty$.
Being Markov additive in the first coordinate means $\mathbf{K}^\infty((m,\sigma),(z+m,\sigma'))=\mathbf{K}^\infty((0,\sigma),(z,\sigma'))$, where
 $$
\mathbf{K}^\infty((0,\sigma),(z,\sigma'))
=\left\{
\begin{array}{lll}
 {\lambda\over C} & \mathrm{for} \ z=1  \ \mathrm{and} \ \sigma'=\sigma\\[4pt]
{ \mu\over C} & \mathrm{for} \ z=-1 \ \mathrm{and} \ \sigma'=\sigma=U & \\[4pt]
  {\alpha \over C}&\mathrm{for} \ z=0,\ \sigma=U \ \mathrm{and} \ \sigma'=D & \\[4pt]
 {\beta\over C} &\mathrm{for} \ z=0,\ \sigma=D \ \mathrm{and} \ \sigma'=U & \\[4pt]
 1-{\lambda+\beta\over C} & \mathrm{for} \ z=0, \ \mathrm{and} \ \sigma'=\sigma=D \\[4pt]
 1-{\alpha+\mu+\lambda\over C} &\mathrm{for} \ z=0, \ \mathrm{and} \ \sigma'=\sigma=U \\[4pt]
\end{array}
\right. 
$$

\noindent
Since we have removed the boundary, the free process walks over all $\mathbb{Z}\times \{Up, Down\}$.

\subsubsection{Feynman-Kac kernel}\label{sec:feynman-kac}
With the free process we associate the following Feynman-Kac kernel: 
\par\noindent
$\displaystyle \mathbf{K}_\theta(\sigma,\sigma')=\sum_{z} \mathbf{K}^\infty((0,\sigma),(z,\sigma'))e^{\theta z}$, where $\sigma,\sigma'\in\{U,D\}$. 
We have
$$
\mathbf{K}_\theta=\left(
\begin{array}{cc}
 {\lambda\over C} e^{\theta}+1-{\alpha+\mu+\lambda\over C}+{\mu\over C}e^{-\theta}
& {\alpha\over C} \\[5pt]
{\beta\over C} & {\lambda\over C} e^{\theta}+1-{\lambda+\beta\over C}\\
\end{array}
\right).
$$
$\mathbf{K}_\theta$ has two eigenvalues 
$$k_{1,2}(\theta):={1\over C} \left(C-{\alpha\over 2}-{\beta\over 2}-{\mu\over 2}-\lambda+{\mu e^{-\theta}\over 2}+\lambda e^{\theta} \pm {1\over 2} \sqrt{ (\mu e^{-\theta}-\alpha-\beta-\mu)^2-4\mu\beta(1-e^{-\theta})}\right),
$$
We are interested in the larger eigenvalue, i.e. we  only consider $k_1$.
We want the largest eigenvalue to be equal to 1, i.e. $k_1(\theta)=1.$ Set: $t=e^{\theta}$. It means
$$C-{1\over2}(\alpha+\beta+\mu)-\lambda+{\mu\over 2t}+\lambda t+{1\over 2}\sqrt{({\mu\over t} -\alpha-\beta-\mu)^2-4\mu\beta(1-{1\over t})}=C.$$
Equivalently,
\begin{equation}\label{eq:sq_root}
\sqrt{\left({\mu\over t} -\alpha-\beta-\mu\right)^2-4\mu\beta(1-{1\over t})}=\alpha+\beta+\mu+2\lambda-{\mu\over t}-2\lambda t
\end{equation}
To find the solution of the above equation, we have to solve 
\begin{equation}\label{eq:Wt}
W(t):=\lambda^2 t^3-\lambda(\beta+\alpha+2\lambda+\mu) t^2+(\lambda(\alpha+2\mu+\beta+\lambda)+\mu\beta)t-\mu(\alpha+\beta)=0.
\end{equation}
Of course $W(1)=0$, thus
$$W(t)= (t-1)(\lambda^2 t^2-\lambda(\beta+\alpha+\lambda+\mu)t+\mu(\lambda+\beta)).$$
We   obtain two solutions:
\begin{equation}\label{eq:t12}
t_{1}=  {\lambda+\beta+\mu+\alpha + \sqrt{s_1}\over 2\lambda}, \qquad t_2=  {\lambda+\beta+\mu+\alpha-\sqrt{s_1}\over 2\lambda}.
\end{equation}
\noindent
Note, that $t_2=\gamma_1^{-1}$.
We want the right hand side of (\ref{eq:sq_root}) to be positive, what is equivalent to
\begin{equation*}
2\lambda t^2-(\alpha +\beta+\mu+2\lambda)t+\mu  <0.
\end{equation*}
However, one can check (noting, that   $s_1=(\mu +\lambda+\beta+\alpha)^2-4\mu (\lambda+\beta)$) that  $t_1$ is not the solution of (\ref{eq:sq_root}), because then the right hand side of the equation is negative.

\subsubsection{The harmonic function of the free process}
\begin{lem}\label{lem:free_proc_harmonic}
The harmonic function of the free process is the following:
$$  h(x,U)=\left({1\over \gamma_1}\right)^{x},\qquad h(x,D)=\left({1\over \gamma_1}\right)^{x} \cdot {2\beta\over\lambda+\beta-\mu-\alpha+\sqrt{s_1} }.  $$
\end{lem}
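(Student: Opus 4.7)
The plan is to exploit the Markov additive structure: since $\mathbf{K}^\infty$ is shift-invariant in the first coordinate, I will look for harmonic functions of the separable form $h(x,\sigma)=t^x v(\sigma)$ for some scalar $t>0$ and some vector $v=(v(U),v(D))$. Plugging this ansatz into $\mathbf{K}^\infty h=h$ and using the definition of the Feynman-Kac kernel from Section~\ref{sec:feynman-kac} immediately reduces the harmonicity condition to the finite-dimensional eigenvalue problem $\mathbf{K}_\theta v = v$, where $t = e^\theta$. Thus $t$ must be a value for which $\mathbf{K}_\theta$ has $1$ as an eigenvalue, and $v$ must be the corresponding right eigenvector.

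From the preceding analysis, the values $t$ with $k_1(\theta)=1$ are precisely the nontrivial roots $t_1,t_2$ of $W(t)$ in (\ref{eq:t12}). The root $t_1$ was shown to be spurious (it makes the right-hand side of (\ref{eq:sq_root}) negative), so the relevant choice is $t=t_2=1/\gamma_1$. This pins down the $x$-dependence of $h$ as $(1/\gamma_1)^x$, accounting for the common factor in both $h(x,U)$ and $h(x,D)$.

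It remains to determine $v$. I will read off $v(D)/v(U)$ from the simpler second row of $\mathbf{K}_\theta v = v$, which gives
\begin{equation*}
\tfrac{\beta}{C}\,v(U) = \tfrac{\lambda+\beta-\lambda t}{C}\,v(D),
\end{equation*}
so $v(D)=\beta v(U)/(\lambda+\beta-\lambda t)$. Substituting $\lambda t = (\lambda+\beta+\mu+\alpha-\sqrt{s_1})/2$ and simplifying the denominator yields
\begin{equation*}
v(D) = \frac{2\beta}{\lambda+\beta-\mu-\alpha+\sqrt{s_1}}\,v(U),
\end{equation*}
and the normalization $v(U)=1$ delivers exactly the constants claimed in the statement. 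A quick consistency check using the first row of $\mathbf{K}_\theta v=v$ (which must be automatically satisfied since $t_2$ was chosen so that $\det(\mathbf{K}_\theta - I)=0$) will close the argument.

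No step is particularly delicate here; the only potential pitfall is sign/root bookkeeping when simplifying $\lambda+\beta-\lambda t_2$, since a wrong branch choice would flip the sign of the square root. I will handle this by using the explicit form of $t_2$ coming from (\ref{eq:t12}) rather than manipulating $\gamma_1$ implicitly.
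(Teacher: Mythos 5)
Your proposal is correct and follows essentially the same route as the paper: the paper's harmonicity conditions $(i)$--$(ii)$ are exactly the two rows of $\mathbf{K}_\theta v=v$ with $t=t_2=1/\gamma_1$, and the paper likewise normalizes $e^{\theta_U}=1$ and reads off $e^{\theta_D}=2\beta/(\lambda+\beta-\mu-\alpha+\sqrt{s_1})$ from the $D$-row, relying (as you do, via $\det(\mathbf{K}_\theta-I)=0$) on the choice of $t_2$ for consistency of the remaining equation.
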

\begin{proof}
We want to find the harmonic function for free process of the  form $h(z,\sigma)=t_2^z e^{\theta_\sigma}$, where $t_2$ is such that the largest eigenvalue of Feynman-Kac kernel is equal to one, i.e. 

$$h(z,\sigma)= \left({1\over \gamma_1}\right)^{z} e^{\theta_\sigma}.$$
For $h$ to be the harmonic function for free process we have to have
\begin{equation}\label{eq:harm_cond}
\forall (z\in\mathbb{Z}) \qquad  \sum_{x ,\sigma} \mathbf{K}^\infty ((z,U),(x,\sigma))h(x,\sigma)=h(z,U) \textrm{ and }
\sum_{x,\sigma} \mathbf{K}^\infty ((z,D),(x,\sigma))h(x,\sigma)=h(z,D).
\end{equation}
First part of (\ref{eq:harm_cond}) means
$$
\sum_{x ,\sigma} \mathbf{K}^\infty((z,U),(x ,\sigma))h(x ,\sigma)=
{\lambda\over C}\left({1\over \gamma_1}\right)^{ (z+1)}e^{\theta_U}+{\mu\over C} \left({1\over \gamma_1}\right)^{ (z-1)}e^{\theta_U}+{\alpha\over C} t_2^{z}e^{\theta_D}$$
$$+\left(1-{\lambda+\mu+\alpha\over C}\right)\left({1\over \gamma_1}\right)^z e^{\theta_U}
=h(z,U)=\left({1\over \gamma_1}\right)^{z}e^{\theta_U}
$$
and equivalently
$$e^{\theta_U}\left[1-{\lambda\over C} {1\over \gamma_1} -{\mu\over C} \gamma_1-\left(1-{\lambda+\mu+\alpha\over C}\right)\right]= {\alpha\over C}  e^{\theta_D},$$
i.e.
$$e^{\theta_U}[\lambda+\mu+\alpha -\lambda {1\over \gamma_1} -\mu \gamma_1]= \alpha e^{\theta_D}.$$

\noindent
Second part of (\ref{eq:harm_cond}) means
$$\sum_{x ,I} \mathbf{K}^\infty((z,D),(x ,I))h(x ,I)={\lambda\over C} \left({1\over \gamma_1}\right)^{(z+1)}e^{\theta_D}+{\beta\over C} \left({1\over \gamma_1}\right)^{ z}e^{\theta_U}+\left(1-{\lambda+\beta\over C}\right)\left({1\over \gamma_1}\right)^{ z}e^{\theta_D}$$
$$=h(z,D)=e^{\theta_D}\left({1\over \gamma_1}\right)^{ z} $$
and equivalently
$$e^{\theta_D}\left[1- {\lambda\over C} {1\over \gamma_1}-\left(1-{\lambda+\beta\over C}\right)\right]= {\beta\over C} e^{\theta_U},$$
i.e.
$$e^{\theta_D}\left[\lambda+\beta- \lambda  \left({1\over \gamma_1}\right)\right]= \beta e^{\theta_U}.$$
\noindent
Putting these conditions together we have:
$$\left\{
\begin{array}{lllll}
e^{\theta_U}[\lambda+\mu+\alpha -\lambda {1\over \gamma_1} -\mu \gamma_1]&=& \alpha  e^{\theta_D},&\qquad & (i)\\[5pt]
e^{\theta_D}[\lambda+\beta-  \lambda {1\over \gamma_1}]&=&\beta e^{\theta_U}.&\qquad & (ii)\\
\end{array}\right.
$$
One of $e^{\theta_U}$ or $e^{\theta_D}$ can be arbitrary, set $e^{\theta_U}=1$.
From $(ii)$ we have
$$e^{\theta_D}={\beta\over\lambda+\beta-\lambda {1\over \gamma_1}}
={2\beta\over\lambda+\beta-\mu-\alpha+\sqrt{ s_1} }.$$
\noindent

\end{proof}

\subsubsection{The twisted free process.}
With the harmonic function of the free process we can define the $h$-transform (or twisted kernel) in the following way: $\boldsymbol{\mathcal{K}}((m,\sigma),(z+m,\sigma'))=\boldsymbol{\mathcal{K}}((0,\sigma),(z,\sigma'))=\boldsymbol{\mathbf{K}}^\infty((0,\sigma),(z,\sigma')) {h(z,\sigma')\over h(0,\sigma)}$, i.e.
 $$
\boldsymbol{\mathcal{K}}((0,\sigma),(z,\sigma'))
=\left\{
\begin{array}{llllll}
   {\lambda\over C} {h(1,U)\over h(0,U)} & =&   {1\over C} {\lambda+\beta+\mu+\alpha-\sqrt{s_1}\over 2} &   \mathrm{for} \ z=1,\\[8pt]

   {\mu\over C} {h(-1,U)\over h(0,U)} & =&   {1\over C} {2\lambda\mu \over \lambda+\beta+\mu+\alpha-\sqrt{s_1} } & \mathrm{for} \ z=-1 \ \mathrm{and} \ \sigma=\sigma'=U & \\[8pt]

 {\alpha\over C} {h(0,D)\over h(0,U)} & =&     
 {1\over C} { 2\alpha\beta  \over \lambda+\beta-\mu-\alpha+\sqrt{s_1}}&\mathrm{for} \ z=0,\ \sigma=U \ \mathrm{and} \ \sigma'=D, & \\[8pt]

{\beta\over C} {h(0,U)\over h(0,D)} & =&   
{1\over C} {\lambda+\beta-\mu-\alpha+\sqrt{s_1}\over 2}&\mathrm{for} \ z=0,\ \sigma=D \ \mathrm{and} \ \sigma'=U,& \\[8pt]

 (1-{\lambda+\beta\over C}) {h(0,D)\over h(0,D)} & =&     1-{\lambda+\beta\over C} & \mathrm{for} \ z=0, \ \mathrm{and} \ \sigma=\sigma'=D, \\[8pt]

\left(1-{\lambda+\alpha+\mu\over C}\right) {h(0,U)\over h(0,U)} & =&    1-{\lambda+\alpha+\mu\over C} &\mathrm{for} \ z=0, \ \mathrm{and} \ \sigma=\sigma'=U. \\[8pt]
\end{array}
\right. 
$$

\medskip\par\noindent
The transition diagram is simply a reweighting of the transitions in Figure \ref{fig:1serv_unr}. \par 

\medskip\par\noindent

Now we are interested in the stationary distribution of the Markovian part of the twisted free process, call it $\boldsymbol{\mathcal{K}}_2$, which  state space is $\{U,D\}$. We have:
$$
\begin{array}{lllllll}
\boldsymbol{\mathcal{K}}_2(U,D)&=&\boldsymbol{\mathcal{K}}((0,U),(0,D), & \\
\boldsymbol{\mathcal{K}}_2(D,U)&=&\boldsymbol{\mathcal{K}}((0,D),(0,U), & \\
\boldsymbol{\mathcal{K}}_2(U,U)&=&\boldsymbol{\mathcal{K}}((0,U),(0,U), &+\boldsymbol{\mathcal{K}}((0,U),( -1,U))+&\boldsymbol{\mathcal{K}}(( ,U),( 1,U)
&=1-\boldsymbol{\mathcal{K}}_2(U,D),\\
\boldsymbol{\mathcal{K}}_2(D,D)&=&\boldsymbol{\mathcal{K}}((0,D),(0,D) &+\boldsymbol{\mathcal{K}}((0,D),(1,D))
& &=1-\boldsymbol{\mathcal{K}}_2(D,U).\\
\end{array}
$$
For 2-states Markov chain with transition matrix $ \left(
\begin{array}{cc}
1-p_1 & p_1  \\
p_2 & 1-p_2\\
\end{array}\right)
$
the stationary distribution is $\displaystyle \pi(1)=p_2/(p_1+p_2), \quad \pi(2)=1-\pi(1)=p_1/(p_1+p_2).$\smallskip 
\par
\noindent Let $\varphi$ be the stationary distribution of $\boldsymbol{\mathcal{K}}_2$.
We have
$$\varphi(U)= {\boldsymbol{\mathcal{K}}_2(D,U)\over \boldsymbol{\mathcal{K}}_2(D,U) + \boldsymbol{\mathcal{K}}_2(U,D)}, \qquad \varphi(D)={\boldsymbol{\mathcal{K}}_2(U,D)\over \boldsymbol{\mathcal{K}}_2(D,U) + \boldsymbol{\mathcal{K}}_2(U,D)}.$$
Note that  $G=  C(\boldsymbol{\mathcal{K}}_2(D,U) + \boldsymbol{\mathcal{K}}_2(U,D)) $  and   rewrite
$$\varphi(U)={1\over G} \boldsymbol{\mathcal{K}}_2(D,U)= {1\over G}\cdot   {\lambda+\beta-\mu-\alpha+\sqrt{s_1}\over 2}, $$
$$\varphi(D)={1\over G}  \boldsymbol{\mathcal{K}}_2(U,D)= 
{1\over G} \cdot { 2\alpha\beta  \over \lambda+\beta-\mu-\alpha+\sqrt{s_1}}. $$
\noindent 
Next we have to compute the stationary horizontal drift of the twisted free process:
$$\tilde{d}^{(1)}=\varphi(U)[\boldsymbol{\mathcal{K}}((x,U),(x+1,U))- \boldsymbol{\mathcal{K}}((x,U),(x-1,U))  ]
+\varphi(D)\boldsymbol{\mathcal{K}}((x,D),(x+1,D))$$
$$={1\over C} {\lambda+\beta+\mu+\alpha-\sqrt{s_1}\over 2}\left(\varphi(U)+\varphi(D)\right) -\varphi(U) {1\over C} {2\lambda\mu \over \lambda+\beta+\mu+\alpha-\sqrt{s_1} }  $$ 
$$= {1\over C} {\lambda+\beta+\mu+\alpha-\sqrt{s_1}\over 2}\cdot 1-{1\over G}\cdot {1\over C}\cdot {\lambda+\beta-\mu-\alpha+\sqrt{s_1}\over 2}  {2\lambda\mu \over \lambda+\beta+\mu+\alpha-\sqrt{s_1} } $$
\begin{equation}\label{eq:drift_d}
={1\over C} \left({\lambda+\beta+\mu+\alpha-\sqrt{s_1}\over 2} -{1\over G}\cdot \lambda\mu\cdot   {\lambda+\beta-\mu-\alpha+\sqrt{s_1} \over \lambda+\beta+\mu+\alpha-\sqrt{s_1} }\right) .
\end{equation}
 

\noindent
The assertion of Proposition \ref{prop:main} follows from the Proposition \ref{prop:mcdonald}, because condition (\ref{condition:pih}) is obviously fulfilled, since the boundary $\triangle$ consists only of two states.

\subsection{Proof of Proposition \ref{prop:main_geometric}}\label{sec:matrix_geometric}

We use the {\bf matrix geometric approach} following Neuts, \cite{Neu_book}.
For a discrete time QBD process as one given in (\ref{eq:matrixP}), Theorem 1.2.1 of Neuts implies that
$$\pi(k,Up)=w_2 \left(e^{\theta_2}\right)^k+w_3\left(e^{\theta_3}\right)^k,$$
where $e^{\theta_2}\geq e^{\theta_3}$  are the eigenvalues of matrix $\mathbf{R}$ described below.
Note   that $\theta_2, \theta_3$ and $w_2, w_3$ depend on $\alpha$. For any $w_2>0$ we have that for $k$ big enough the term $(e^{\theta_2})^k$ dominates $(e^{\theta_3})^k$. However, when $\alpha\to 0$, then $w_2\to 0$  (see Remark on page \pageref{remark:constants}), so that  $w_3(e^{\theta_3})^k$ is the leading term. \par 

\noindent
For matrices $\PP_2, \PP_1, \PP_0$ defined in (\ref{eq:neuts_matrices}) we want to find a matrix $\mathbf{R}=\left[\begin{array}{cc} r_{11} & r_{12}\\ r_{21} & r_{22}\end{array}\right]$ fulfilling
$$\mathbf{R}=\mathbf{R}^2\PP_2+\mathbf{R}\PP_1+\PP_0.$$
We have:
$$\mathbf{R}^2\PP_2+\mathbf{R}\PP_1+\PP_0$$
$$=
\left[\begin{array}{cc}
{(r_{11}^2+r_{12}r_{21})\over C}\mu & 0 \\[4pt]
{(r_{21}r_{11}+r_{22}r_{21})\over C}\mu & 0 \\
\end{array}
\right]
+
\left[\begin{array}{cc}
r_{11}\left(1-{\mu+\lambda+\alpha\over C}\right)+{r_{12}\beta\over C}
			& {r_{11}\alpha\over C}+r_{12}\left(1-{\lambda+\beta\over C}\right) \\[4pt]
r_{21}\left(1-{\mu+\lambda+\alpha\over C}\right)+{r_{22}\beta\over C}
			& {r_{21}\alpha\over C}+r_{22}\left(1-{\lambda+\beta\over C}\right) \\
\end{array}
\right]
+
\left[\begin{array}{cc}
{\lambda\over C}& 0 \\[4pt]
0 & {\lambda\over C} \\
\end{array}
\right],
$$
i.e.
$$
\left[\begin{array}{cc}
r_{11} & r_{12} \\[4pt]
r_{21} & r_{22} \\
\end{array}
\right]
= 
\left[\begin{array}{cc}
{(r_{11}^2+r_{12}r_{21})\over C}\mu+r_{11}\left(1-{\mu+\lambda+\alpha\over C}\right)+{r_{12}\beta\over C}+ {\lambda\over C}
			& {r_{11}\alpha\over C}+r_{12}\left(1-{\lambda+\beta\over C}\right) \\[4pt]
{(r_{21}r_{11}+r_{22}r_{21})\over C}\mu +r_{21}\left(1-{\mu+\lambda+\alpha\over C}\right)+{r_{22}\beta\over C}
			& {r_{21}\alpha\over C}+r_{22}\left(1-{\lambda+\beta\over C}\right) + {\lambda\over C}\\
\end{array}
\right].
$$
One can  check that the solution is
$$\mathbf{R}=\left[
\begin{array}{cc} 
	\displaystyle {\lambda\over\mu} & \displaystyle {\alpha\lambda\over \mu(\lambda+\beta)} \\ [10pt]
       \displaystyle 	{\lambda\over\mu} & \displaystyle {(\alpha+\mu)\lambda\over(\lambda+\beta)\mu} \\
\end{array}
\right]={\lambda\over\mu}\left[
\begin{array}{cc} 
	\displaystyle 1 & \displaystyle {\alpha \over   \lambda+\beta } \\ [10pt]
       \displaystyle 	1 & \displaystyle { \alpha+\mu  \over\lambda+\beta} \\
\end{array}\right].
$$
Eigenvalues of $\mathbf{R} $ are
$$e^{\theta_2}:={\lambda+\mu+\alpha+\beta+\sqrt{s_1}\over 2(\lambda+\beta)}\cdot{\lambda\over\mu},$$
$$e^{\theta_3}:={\lambda+\mu+\alpha+\beta-\sqrt{s_1}\over 2(\lambda+\beta)}\cdot{\lambda\over\mu}.$$

It is easy to check that $e^{\theta_2}=\gamma_1$ (what we already have had) and $e^{\theta_3}=\gamma$. Now,  as $\alpha\to 0$ we have $w_2\to 0$ and thus $\lim_{\alpha\to 0}w_3>0$ (because both limits cannot be equal to 0). The leading term is $w_3(e^{\theta_3})^k$, thus the asymptotic of $\pi(k,Up)$ is $C(Up)\gamma^k$, where  $C(Up)=w_3$. This finishes the proof. \par 
\medskip \par 
\noindent{\bf Remark.} Note, that this method does not give us constant $w_3$ (nor $w_2$, but we already have it, it is $C(Up)$).

\medskip\par\noindent
{\bf Remark.} While looking for parameter $\theta$ in Section \ref{sec:feynman-kac} for which the largest eigenvalue of the Feynman-Kac kernel is equal to 1, we encountered equation (\ref{eq:Wt}). This equation has two solutions: $t_1$ and $t_2$ given in (\ref{eq:t12}). It turns out, that $t_1$ is not the solution for Feynman-Kac kernel, because the right hand side of (\ref{eq:sq_root}) (and (\ref{eq:Wt}) is simply obtained from (\ref{eq:sq_root}) by squaring both sides) is negative. However, $t_2$ is exactly the second term in spectral expansion of $\pi$, what we derived in Section \ref{sec:matrix_geometric} using matrix geometric approach. We conjecture that this can always be  the case for QBD processes.

\subsection{Proof of Proposition \ref{prop:main2p}}
The  asymptotic without constants  is obtained via Proposition \ref{prop:mcdonald} by calculating the harmonic function of the free process and by  verifying  that condition (\ref{condition:pih}), what is done in Section \ref{sec:condition:pih}. 
\subsubsection{The free process.}\noindent
For Model 2 as the boundary we can take $\triangle=\{(0,y,\sigma), y\in \mathbb{N}, \sigma\in\{Up,Down\}\}$. Then the process outside $\triangle$ is shift invariant relative to first coordinate. Define free process
$\mathbf{K}^\infty((m,y,\sigma),(z+m,y',\sigma'))=\mathbf{K}^\infty((0,y,\sigma),(z,y',\sigma'))$, where
 $$
\mathbf{K}^\infty((0,y,\sigma),(z,y',\sigma'))
=\left\{
\begin{array}{lll}
  {\lambda\over C} & \mathrm{for} \ z=0,\ y'=y+1, \ \sigma'=\sigma\\[4pt]
{ \mu\over C}p & \mathrm{for} \ z=-1,\ y'=y \ \mathrm{and} \ \sigma'=\sigma=U & \\[4pt]
{ \mu\over C}(1-p) & \mathrm{for} \ z=-1,\ y'=y+1 \ \mathrm{and} \ \sigma'=\sigma=U & \\[4pt]
 {\mu\over C} & \mathrm{or} \ z=1,\ y'=y-1 \geq 0 \ \mathrm{and} \ \sigma'=\sigma& \\[4pt]

  {\alpha \over C}&\mathrm{for} \ z=0,\ y'=y,\ \sigma=U \ \mathrm{and} \ \sigma'=D & \\[4pt]
  {\beta \over C}&\mathrm{for} \ z=0,\ y'=y,\ \sigma=D \ \mathrm{and} \ \sigma'=U & \\[4pt]
 
 1-{\lambda+\beta\over C} & \mathrm{for} \ z=0, \ y'=y=0 \ \mathrm{and} \ \sigma'=\sigma=D \\[4pt]
 1-{\lambda+\mu +\alpha\over C} &\mathrm{for} \ z=0, \ y'=y=0\ \mathrm{and} \ \sigma'=\sigma=U \\[4pt]
 1-{\lambda+\mu +\beta \over C} &\mathrm{for} \ z=0, \ y'=y\geq 1\ \mathrm{and} \ \sigma'=\sigma=D,  \\[4pt]

 1-{\lambda+2\mu +\alpha \over C} &\mathrm{for} \ z=0, \ y'=y\geq 1 \ \mathrm{and} \ \sigma'=\sigma=U. \\[4pt]
\end{array}
\right. 
$$
\noindent
After removing  the boundary, the free process walks over all $\mathbb{Z}\times\mathbb{N}\times \{Up, Down\}$.

\subsubsection{The harmonic function of the free process}\label{sec:Model2_harmonic_fun}
\begin{lem}\label{lem:free_proc_harmonic2}
The harmonic function of the free process is following: 
$$h(x,y,U)=\left({1\over \gamma_p}\right)^{x+y},\quad h(x,y,D)= \left({1\over \gamma_p}\right)^{x+y} {2\beta\over\lambda+\beta-\mu p-\alpha+\sqrt{s_p} }.$$
\end{lem}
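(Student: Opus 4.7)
My plan is to follow the same strategy as in the proof of Lemma~\ref{lem:free_proc_harmonic}, adapted to the richer transition structure of Model~2. The key structural observation is that every transition of the free process either leaves $x+y$ unchanged or alters it by exactly one unit: an arrival at server~2 increases $y$ and hence $x+y$ by one; a completion at server~1 with probability $p$ decreases $x$ and hence $x+y$ by one; a completion at server~1 with probability $1-p$ decreases $x$ while increasing $y$, leaving $x+y$ fixed; a service at server~2 does the same; and status changes leave $(x,y)$ alone. This makes the ansatz
\begin{equation*}
h(x,y,U)=\gamma_p^{-(x+y)} e^{\theta_U},\qquad h(x,y,D)=\gamma_p^{-(x+y)} e^{\theta_D}
\end{equation*}
natural: it reduces the harmonicity condition $\mathbf{K}^\infty h = h$ to a two-by-two linear system in $(e^{\theta_U},e^{\theta_D})$ together with a scalar condition on $\gamma_p$.

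First I would substitute this ansatz into $\mathbf{K}^\infty h(z,y,U)=h(z,y,U)$ for an interior point $y\ge 1$. Pulling out the common factor $\gamma_p^{-(z+y)}$ and cancelling it on both sides, the contributions collapse to
\begin{equation*}
\alpha\, e^{\theta_D}=e^{\theta_U}\bigl[\lambda+\mu p+\alpha-\lambda\gamma_p^{-1}-\mu p\,\gamma_p\bigr].
\end{equation*}
The analogous computation at $(z,y,D)$ yields
\begin{equation*}
\beta\, e^{\theta_U}=e^{\theta_D}\bigl[\lambda+\beta-\lambda\gamma_p^{-1}\bigr].
\end{equation*}
Next I would verify the boundary cases $y=0$: here the server-2 service transition is absent, but the self-loop probability also loses exactly the corresponding $\mu/C$ contribution, so the two effects cancel and the same pair of equations is obtained.

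Normalising $e^{\theta_U}=1$, the second equation fixes $e^{\theta_D}=\beta/(\lambda+\beta-\lambda\gamma_p^{-1})$. Substituting into the first and multiplying through by $t(\lambda+\beta-\lambda t)$ with $t=\gamma_p^{-1}$ produces a cubic in $t$ having $t=1$ as an obvious root; dividing out $(t-1)$ leaves the quadratic
\begin{equation*}
\lambda^{2}t^{2}-\lambda(\lambda+\beta+\alpha+\mu p)\,t+\mu p(\lambda+\beta)=0,
\end{equation*}
whose smaller root is exactly $\gamma_p^{-1}$ as defined in the paper (the discriminant simplifies to $s_p$, using the equivalent expression $(\lambda+\beta+\mu p+\alpha)^2-4\mu p(\lambda+\beta)$). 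Reinserting this value into the formula for $e^{\theta_D}$ and simplifying yields the coefficient $2\beta/(\lambda+\beta-\mu p-\alpha+\sqrt{s_p})$ claimed in the lemma.

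The main obstacle in this programme is simply the bookkeeping: there are four sub-cases ($\sigma\in\{U,D\}$ times $y=0$ versus $y\ge 1$), and in each one has to track several transition types, with the boundary terms arranged so that the cancellations noted above actually reduce everything to the same pair of equations as in the interior. Once those cancellations are confirmed, the remaining algebra is the same polynomial manipulation carried out in the proof of Lemma~\ref{lem:free_proc_harmonic}, with $\mu$ replaced by $\mu p$ and $\gamma_1$ by $\gamma_p$.
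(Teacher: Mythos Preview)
Your proposal is correct and follows essentially the same route as the paper: write out the harmonicity conditions, reduce to the pair of equations linking $e^{\theta_U}$ and $e^{\theta_D}$, eliminate to obtain the cubic in $t$, factor out $(t-1)$, and select the root $t_2=\gamma_p^{-1}$. The one difference is that the paper begins with the more general ansatz $h(x,y,\sigma)=e^{\theta_1 x}e^{\theta_2 y}e^{\theta_\sigma}$ and \emph{derives} $\theta_1=\theta_2$ by comparing the $y=0$ and $y\ge 1$ equations, whereas you impose $\theta_1=\theta_2$ from the outset via the structural observation that every transition changes $x+y$ by at most one; your shortcut is legitimate and slightly cleaner, but you should still record that the boundary and interior cases yield identical equations (which you do note), since otherwise the ansatz would not be verified on all of $\mathbb{Z}\times\mathbb{N}\times\{U,D\}$.
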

\begin{proof}
For the free process we want to find the harmonic function  of form $h(x,y,\sigma)=e^{\theta_1 x} e^{\theta_2 y} e^{\theta_\sigma}$.
\par\noindent
For $h$ to be the harmonic function for free process we must have
\begin{equation*}
\forall\ (y\in N, \sigma\in\{U,D\} ) \qquad  \sum_{x',y' ,\sigma'} \mathbf{K}^\infty ((0,y,\sigma),(x',y',\sigma'))h(x',y',\sigma')=h(0,y,\sigma).
\end{equation*}
For  $y=0,\ \sigma=U$ we have
$$ \mathbf{K}^\infty ((0,0,U),(0,1,U))h(0,1,U)
+\mathbf{K}^\infty ((0,0,U),(-1,0,U))h(-1,0,U)$$
$$
+\mathbf{K}^\infty ((0,0,U),(-1,1,U))h(-1,1,U)+\mathbf{K}^\infty ((0,0,U),(0,0,D))h(0,0,D)$$ 
$$
+\mathbf{K}^\infty ((0,0,U),(0,0,U))h(0,0,U)=h(0,0,U),$$
$${\lambda\over C}e^{\theta_2}e^{\theta_U}+{\mu\over C}pe^{-\theta_1}e^{\theta_U}+{\mu\over C}(1-p)e^{-\theta_1}e^{\theta_2}e^{\theta_U}
+{\alpha\over C}e^{\theta_D}+\left(1-{\lambda+\mu+\alpha\over C}\right)e^{\theta_U}=e^{\theta_U},$$
i.e.
$$e^{\theta_U}[\lambda+\mu+\alpha-\lambda e^{\theta_2}-\mu p e^{-\theta_1}-\mu (1-p) e^{-\theta_1}e^{\theta_2}]=\alpha e^{\theta_D}$$
Similarly, considering cases $y\geq 1, \ \sigma=U$; $y=0, \sigma=D$ and $y\geq 1, \sigma=D$ we obtain following four equations:
$$\left\{
\begin{array}{lllll}
e^{\theta_U}[\lambda+\mu+\alpha-\lambda e^{\theta_2}-\mu p e^{-\theta_1}-\mu(1-p) e^{-\theta_1}e^{-\theta_2}]&=& \alpha e^{\theta_D},&\qquad &  \\[5pt]
e^{\theta_U}[\lambda+2\mu+\alpha-\lambda e^{\theta_2}-\mu p e^{-\theta_1}-\mu(1-p) e^{-\theta_1}e^{-\theta_2}-\mu e^{\theta_1}e^{-\theta_2}] &=& \alpha e^{\theta_D} ,&\qquad &  \\[5pt]
e^{\theta_D}[\lambda+\beta-\lambda e^{\theta_2} ] &=& \beta e^{\theta_U} ,&\qquad &  \\[5pt]
 e^{\theta_D}[\lambda+\mu+\beta-\lambda e^{\theta_2}-\mu e^{\theta_1}e^{-\theta_2} ]&=&\beta e^{\theta_U}  .&\qquad &  \\[5pt]
\end{array}\right.
$$
First two imply that $e^{\theta_1}=e^{\theta_2}$ and then last two are equivalent. We are left with 2 equations and 3 variables, thus we can set $e^{\theta_U}=1$. Denoting $t=e^{\theta_1}(=e^{\theta_2})$ we have
$$\left\{
\begin{array}{lllll}
 \lambda+\mu+\alpha-\lambda t-\mu p {1\over t} -\mu(1-p)&=& \alpha e^{\theta_D},&\qquad & (i)\\[5pt]
\lambda+\beta-\lambda t   &=& {\beta \over e^{\theta_D}} .&\qquad & (ii)\\[5pt]
\end{array}\right.
$$
Comparing $e^{\theta_D}$ from both equations we have
$${\lambda+\mu+\alpha-\lambda t-\mu p {1\over t} -\mu(1-p)\over \alpha}={\beta\over \lambda+\beta-\lambda t},$$
$$(\lambda+\mu+\alpha-\lambda t-\mu p {1\over t} -\mu(1-p))( \lambda+\beta-\lambda t)=\alpha\beta.$$
Multiplying both sides by $t$ and noting that $t-1$ is one of the solutions, we can rewrite it as
$$(t-1)(\lambda^2 t^2-\lambda(\mu p+\lambda+\alpha+\beta)t+\mu p(\lambda+\beta))=0.$$
Recall that $s_p=(\mu p-\lambda-\beta-\alpha)^2+4\alpha\mu p$. 
The solutions are
$$t_1={\mu p+\lambda+\alpha+\beta + \sqrt{s_p}\over 2\lambda},\quad t_2={\mu p+\lambda+\alpha+\beta - \sqrt{s_p}\over 2\lambda}.$$
Noting that   $s_p=(\mu p+\lambda+\beta+\alpha)^2-4\mu p(\lambda+\beta)$, it can be easily check $t_1>1 \iff \lambda>{\beta\over\alpha+\beta}\mu p$ and $t_2>1 \iff \lambda<{\beta\over\alpha+\beta}\mu p$, i.e. only $t_2$ (which is equal to ${1\over \gamma_p}$) is a valid solution.

 \noindent
From $(ii)$ we have
$$e^{\theta_D}={\beta\over\lambda+\beta-\lambda t_2}
={2\beta\over\lambda+\beta-\mu p-\alpha+\sqrt{  s_p}}.$$
\noindent

\end{proof}
\subsection{Proof of Proposition \ref{prop:main2}}

Since Model 2 with $p=1$ is the special case of general Model 2, we already have the harmonic function given
in Lemma \ref{lem:free_proc_harmonic2}. We can proceed with the twisted free process.
\subsubsection{The twisted free process.}
 
Define the twisted kernel  in the following way:
$\boldsymbol{\mathcal{K}}((m,y,\sigma),(z+m,y',\sigma'))=\boldsymbol{\mathcal{K}}((0,y,\sigma),(z,y',\sigma'))=\boldsymbol{\mathbf{K}}^\infty((0,y,\sigma),(z,y',\sigma')) {h(z,y',\sigma')\over h(0,y,\sigma)}$
 $$
=\left\{
\begin{array}{lllll}
  {\lambda\over C}{h(0,y+1,\sigma)\over h(0,y ,\sigma)} &=& {1\over C} {\lambda+\beta+\mu+\alpha-\sqrt{s_1}\over 2}& \mathrm{for} \ z=0,\ y'=y+1, \ \sigma'=\sigma,\\[8pt]
 
{ \mu\over C} {h(-1,y,U)\over h(0,y,U)} &= & {1\over C}{2\lambda\mu\over \lambda+\beta+\mu+\alpha-\sqrt{s_1}} & \mathrm{for} \ z=-1,\ y'=y \ \mathrm{and} \ \sigma'=\sigma=U, & \\[8pt]
 { \mu\over C} {h(0,y,\sigma)\over h(-1,y+1,\sigma)}&=&{\mu\over C}& \mathrm{for} \ z=1,\ y'=y-1 \geq 0 \ \mathrm{and} \ \sigma'=\sigma,& \\[8pt]

  {\alpha \over C}{h(0,y,D)\over h(0,y,U)}&=&{1\over C} {2\alpha\beta\over\lambda+\beta-\mu-\alpha+\sqrt{s_1}}&\mathrm{for} \ z=0,\ y'=y,\ \sigma=U \ \mathrm{and} \ \sigma'=D, & \\[8pt]
  {\beta \over C}{h(0,y,U)\over h(0,y,D)}&=&{1\over C}{\lambda+\beta-\mu-\alpha+\sqrt{s_1}\over 2}&\mathrm{for} \ z=0,\ y'=y,\ \sigma=D \ \mathrm{and} \ \sigma'=U, & \\[8pt]
 
 (1-{\lambda+\beta\over C}){h(0,0,D)\over h(0,0,D)}&=& 1-{\lambda+\beta\over C}& \mathrm{for} \ z=0, \ y'=y=0 \ \mathrm{and} \ \sigma'=\sigma=D, \\[8pt]
 (1-{\lambda+\mu +\alpha\over C}){h(0,0,U)\over h(0,0,U)}&=&1-{\lambda+\mu +\alpha\over C} &\mathrm{for} \ z=0, \ y'=y=0\ \mathrm{and} \ \sigma'=\sigma=U, \\[8pt]
 (1-{\lambda+\mu +\beta \over C}){h(0,0,D)\over h(0,0,D)}&=&1-{\lambda+\mu +\beta \over C} &\mathrm{for} \ z=0, \ y'=y\geq 1\ \mathrm{and} \ \sigma'=\sigma=D,  \\[8pt]
 (1-{\lambda+2\mu +\alpha \over C}){h(0,0,U)\over h(0,0,U)}&=& 1-{\lambda+2\mu +\alpha \over C}&\mathrm{for} \ z=0, \ y'=y\geq 1 \ \mathrm{and} \ \sigma'=\sigma=U. \\[8pt]
\end{array}
\right. 
$$

\medskip\par\noindent
The transitions of twisted free process are reweighted   transitions of the free process.

\medskip\par\noindent
We are interested in the stationary distribution of the Markovian part of the twisted free process, call it $\boldsymbol{\mathcal{K}}_2$, which state space is $\mathbb{N}\times\{U,D\}$. 
\par 
\noindent 
Denote: 
$$
\lambda'={1\over C}{\lambda+\beta+\mu+\alpha-\sqrt{s_1}\over 2}, \mu'={\mu\over C}, \alpha'={1\over C}{2\alpha\beta\over \lambda+\beta-\mu-\alpha+\sqrt{s_1}}, \beta'={1\over C}{\lambda+\beta-\mu-\alpha+\sqrt{s_1}\over 2}.
$$

\noindent 
The transition of $\boldsymbol{\mathcal{K}}_2$ are
$$
\boldsymbol{\mathcal{K}}_2((y,\sigma),(y',\sigma'))=\left\{
\begin{array}{lllllll}
\lambda' &\mathrm{for} \ y'=y+1\ \mathrm{and}\ \sigma'=\sigma,\\[5pt]
\mu' &\mathrm{for} \ y'=y-1\geq 0\ \mathrm{and}\ \sigma'=\sigma,\\[5pt]
\alpha' &\mathrm{for} \ y'=y,\ \sigma=U\ \mathrm{and}\ \sigma'=D,\\[5pt]
\beta' &\mathrm{for} \ y'=y,\ \sigma=D\ \mathrm{and}\ \sigma'=U,\\[5pt]
1-(\lambda'+\mu'+\alpha') &\mathrm{for} \ y'=y\geq 1\ \mathrm{and}\ \sigma'=\sigma=U,\\[5pt]
1-(\lambda'+\mu'+\beta') &\mathrm{for} \ y'=y\geq 1\ \mathrm{and}\ \sigma'=\sigma=D,\\[5pt]
1-(\lambda'+\alpha') &\mathrm{for} \ y'=y=0\ \mathrm{and}\ \sigma'=\sigma=U,\\[5pt]
1-(\lambda'+\beta') &\mathrm{for} \ y'=y=0\ \mathrm{and}\ \sigma'=\sigma=D.\\
\end{array}\right.
$$
The stationary distribution of $\boldsymbol{\mathcal{K}}_2$ is given by:
$$\varphi(y,U)=B\cdot \left({\lambda'\over\mu'}\right)^y{\beta'\over\alpha'+\beta'},
\quad
\varphi(y,D)=B\cdot \left({\lambda'\over\mu'}\right)^y{\alpha'\over\alpha'+\beta'},\quad
B=1-{\lambda'\over \mu'}=1-{\lambda+\beta+\mu+\alpha-\sqrt{s}\over 2\mu}.
$$
Marginally $\boldsymbol{\mathcal{K}}_2(y,\cdot)$ is a birth and death process with birth rate $\lambda'$ and death rate $\mu'$, the stationary distribution of it is geometric: probability of having $k$ customers equals to  $B_1  \left({\lambda'\over\beta'}\right)^k$ ($B_1$ is  a normalisation constant). Similarly, $\boldsymbol{\mathcal{K}}_2(\cdot,\sigma)$ is a Markov chain with two states, the stationary distribution of which is: ${\beta'\over \alpha'+\beta'}$ of being in $Up$ and  ${\alpha'\over \alpha'+\beta'}$ of being in $Down$ status. Process $\boldsymbol{\mathcal{K}}_2(y,\sigma)$ is not a product of its marginals, but its stationary distribution is of a product form. This can be checked directly, for example for $y\geq 1$ we have:
$$\varphi(y,U)=\sum_{y',\sigma'}\varphi(y',\sigma')\boldsymbol{\mathcal{K}}_2((y',\sigma'),(y,U))$$
since
$$\varphi(y,U)=\varphi(y-1,U)\boldsymbol{\mathcal{K}}_2((y-1,U),(y,U))
+
\varphi(y+1,U)\boldsymbol{\mathcal{K}}_2((y+1,U),(y,U))$$
$$
+\varphi(y,D)\boldsymbol{\mathcal{K}}_2((y,D),(y,U))
+\varphi(y,U)\boldsymbol{\mathcal{K}}_2((y,U),(y,U)),
$$
$$\varphi(y,U)=\varphi(y-1,U)\lambda'
+
\varphi(y+1,U)\mu'+\varphi(y,D)\beta'
+\varphi(y,U)(1-(\lambda'+\mu'+\alpha')),
$$
$$\varphi(y,U)(\lambda'+\mu'+\alpha')=\varphi(y-1,U)\lambda'
+ \varphi(y+1,U)\mu'+\varphi(y,D)\beta',
$$
$$B\cdot \left({\lambda'\over\mu'}\right)^y{\beta'\over\alpha'+\beta'}(\lambda'+\mu'+\alpha')=B\cdot \left({\lambda'\over\mu'}\right)^{y-1}{\beta'\over\alpha'+\beta'}\lambda'
+ B\cdot \left({\lambda'\over\mu'}\right)^{y+1}{\beta'\over\alpha'+\beta'}\mu'+B\cdot \left({\lambda'\over\mu'}\right)^y{\alpha'\over\alpha'+\beta'}\beta',
$$
$$\beta'(\lambda'+\mu'+\alpha')={\mu'\over\lambda'}\beta'\lambda'
+ {\lambda'\over\mu'}\beta'\mu'+\alpha'\beta',
$$
$$\beta'(\lambda'+\mu'+\alpha')=\beta'(\lambda'+\mu'+\alpha').
$$

\noindent 
Next we have to compute the stationary horizontal drift of the twisted free process:
\par 
\noindent 

$$
\begin{array}{ll}
 \tilde{d}^{(2)}=\\
\varphi(0,U)\left[0-\boldsymbol{\mathcal{K}}((0,y,U),(-1,y,U))\right]+
\sum_{y=1}^\infty \varphi(y,U)\left[\boldsymbol{\mathcal{K}}((0,y,U),(1,y-1,U))-\boldsymbol{\mathcal{K}}((0,y,U),(-1,y,U)\right]
\end{array}
$$
$$+\sum_{y=1}^\infty \varphi(y,D)[\boldsymbol{\mathcal{K}}((0,y,D),(1,y-1,D))]$$
$$ \varphi(0,U)\left[0-{1\over C} {2\lambda\mu\over\lambda+\beta+\mu+\alpha-\sqrt{s_1}}\right] +
\left[{\mu\over C}-{1\over C} {2\lambda\mu\over\lambda+\beta+\mu+\alpha-\sqrt{s_1}}\right]\sum_{y=1}^\infty \varphi(y,U) +{\mu\over C}\sum_{y=1}^\infty \varphi(y,D)$$
$$={\mu\over C}\left(\sum_{y=1}^\infty \varphi(y,U)+\sum_{y=1}^\infty \varphi(y,D)\right)-{1\over C} {2\lambda\mu\over\lambda+\beta+\mu+\alpha-\sqrt{s_1}} \sum_{y=0}^\infty \varphi(y,U).$$
We have $\displaystyle\sum_{y=1}^\infty \varphi(y,U)+\sum_{y=1}^\infty \varphi(y,D)=1-\varphi(0,U)-\varphi(0,D)=1-B\cdot {\beta'\over \alpha'+\beta'}-B\cdot {\alpha'\over \alpha'+\beta'}=1-B={\lambda'\over\mu'}$ and
\noindent 
 $\displaystyle \sum_{y=0}^\infty \varphi(y,U)={\beta'\over \alpha'+\beta'}$.
Using the definitions of $\alpha'$ and $\beta'$ we arrive finally at
\begin{equation}\label{eq:drift_d2}
 \tilde{d}^{(2)}=
{1\over C}\left({\lambda+\mu+\beta+\alpha-\sqrt{s_1}\over 2}-{2\lambda\mu(\lambda+\beta-\mu-\alpha+\sqrt{s_1})^2\over (\lambda+\beta+\mu+\alpha-\sqrt{s_1})(4\alpha\beta+(\lambda+\beta-\mu-\alpha+\sqrt{s_1})^2}\right).
\end{equation}


\noindent
Now we make use of the  Proposition \ref{prop:mcdonald}. We postpone verifying the condition (\ref{condition:pih}) to Section \ref{sec:condition:pih}. In our case $A=\{(y,\sigma), y\in\mathbb{N}, \ \sigma\in\{U,D\}\}$.
\par\noindent
For $\sigma=U$ we have
$$\pi(k,y,Up)\sim {\eta^{(2)}\varphi(y,Up)\over \tilde{d}^{(2)} h(k,y,Up)}={\eta^{(2)}\over\tilde{d}^{(2)}}  B \left(\lambda'\over \mu'\right)^y {\beta'\over \alpha'+\beta'} \gamma_1^k\gamma_1^y
={\eta^{(2)}\over\tilde{d}^{(2)}} B \left({\lambda'\over \mu'}\gamma_1\right)^y {\beta'\over \alpha'+\beta'} \gamma_1^k.$$
Noting that ${\lambda'\over \mu'}\gamma_1={\lambda\over\mu}$ and $G={1\over C}(\alpha'+\beta')$ we have
$$\pi(k,y,Up) \sim {\eta^{(2)}\over \tilde{d}^{(2)}}  {1\over G} {\lambda+\beta-\mu-\alpha+\sqrt{s_1}\over 2} B \left(\lambda\over\mu\right)^y\gamma_1^y=C^{(2)}(Up)\left(\lambda\over\mu\right)^y\gamma_1^y. $$
Similarly for $\sigma=D$ we have
$$\pi(k,y,Down)={\eta^{(2)}\varphi(y,Down)\over \tilde{d}^{(2)} h(k,y,Down)}
={\eta^{(2)}\over\tilde{d}^{(2)}}   {\alpha \over G} B  \left({\lambda \over \mu } \right)^y \gamma_1^k
=C^{(2)}(Down) \left({\lambda \over \mu } \right)^y \gamma_1^k.$$
 
\subsubsection{Verification of the assumption of Proposition \ref{prop:mcdonald}.}\label{sec:condition:pih}
For Propositions \ref{prop:main2} and \ref{prop:main2p} to hold, condition (\ref{condition:pih}) must be verified. We show this for a general $p\in(0,1]$. We consider similar network to Model 2, but we do not allow a customer to join the queue at server 1 when the server is in $Down$ status; in this case customer is rerouted again to the queue at server 1. This is a case of unreliable network with rerouting (``the loss regime'', customer is lost to server in $Down$ status, but it is not lost to the network) introduced by Sauer and Daduna (see Sauer, Daduna \cite{SauDad03} or Sauer \cite{SauThesis}).  Namely, when server is in $Up$ status it operates as classical Jackson network, but when it is in $Down$ status the routing is changed, so that with probability 1 customer stays at server 2. This is so-called RS-RD (Random Selection - Random Destination) principle for rerouting. They showed, that then the stationary distribution (say $\pi^{(S)}$) is   a product form of the stationary distribution of pure Jackson network and of the stationary distribution of being in ${Up}$ or $Down$ status. For the above introduced system, the traffic  equation is:
$$\eta_1=\eta_2,\qquad \eta_2=\lambda+\eta_1(1-p).$$
The solution is $\eta_1=\eta_2={\lambda\over p}$. Finally,
$$\pi^{(S)}(x,y,Up)=C^{(S)} \cdot \left({\lambda\over \mu p}\right)^{x+y} {\beta\over\alpha+\beta}, \quad \pi^{(S)}(x,y,Down)=C^{(S)} \cdot \left({\lambda\over \mu p}\right)^{x+y} {\alpha\over\alpha+\beta},$$
where $C^{(S)}$ is   a normalisation constant. It also can be checked directly, that the above is the correct stationary distribution, by checking   that balance equation holds.
\smallskip\par 
Described network differs from Model 2 only by one movement: for $x>0$ and $y>0$ there is a possible transition from $(x,y,Down)$ to $(x+1,y-1,Down)$ for Model 2, but there is no such transition in the above model. Obviously, the stationary distribution $\pi^{(S)}(0,\cdot,\sigma)$ is stochastically greater then $\pi(0,\cdot,\sigma)$, the stationary distribution of Model 2. This can be seen for example by constructing a coupling such that both networks move in the same way, whenever it is possible (when one of the processes is to make forbidden transition - like leaving the state space - it makes no move then), except one transition: when process of Model 2 goes from $(x,y,Down)$ to $(x+1,y-1,Down)$, then the other makes no move.
\par 
Now, for Model 2 as boundary we have $\triangle=\{(0,y,\sigma): y\in\mathbf{N}, \sigma\in\{Up,Down\}\}$ and the harmonic function (given in Lemma \ref{lem:free_proc_harmonic2}) is $h(x,y,\sigma)=C(\sigma) \left({1\over \gamma_p}\right)^{x+y}$. In the  condition (\ref{condition:pih}) we have:
$$\sum_{(x,A)\in\triangle} \pi(x,A)h(x,A)=\sum_{y=0}^\infty \pi(0,y,U)h(0,y,U) + \sum_{y=0}^\infty \pi(0,y,D)h(0,y,D) $$
$$=: E_\pi[h(0,Y,U)]+ E_\pi[h(0,Y,D)] \leq  E_\pi^{(S)}[h(0,Y,U)]+ E_\pi^{(S)}[h(0,Y,D)]$$
since  $h$ is increasing w.r.t. second coordinate and
$$ \pi(0,\cdot ,\sigma) <_{st} \pi^{(S)}(0,\cdot,\sigma), \quad \sigma\in\{Up,Down\}.$$
And for $\pi^{(S)}$ we have
$$ E_\pi^{(S)}[h(0,Y,U)]+ E_\pi^{(S)}[h(0,Y,D)] = \sum_{y=0}^\infty c_1 \left({\lambda\over \mu p}\right)^y \left({1\over \gamma_p}\right)^y  + \sum_{y=0}^\infty  c_2 \left({\lambda\over \mu p}\right)^y \left({1\over \gamma_p}\right)^y$$
with appropriate constants $c_1$ and $c_2$. Of course it is finite if ${\lambda\over \mu p}<\gamma_p$. It can easily be checked, that it holds for any set of parameters. Thus, the condition (\ref{condition:pih}) holds.

\section*{Acknowledgements}
This work was done during my stay in Ottawa as a Postdoctoral Fellow, supported by NSERC grants of David McDonald and Rafa{\l} Kulik.
I would like to thank David McDonald for the whole assistance during writing this paper and Rafa{\l} Kulik for many useful comments and suggestions.


\begin{thebibliography}{99}



\bibitem{AdaFolMcD08}
Adan, I., Foley, R. D., McDonald, D. R.
\newblock Exact asymptotic for the stationary distribution of a Markov chain: a production model.
\newblock { Queueing Systems.} \textbf{2009}, {62(4)}, 311--344.

\bibitem{Bur56}
Burke, P.J. 
\newblock The output of a queueing system.
\newblock {Operations Research.} \textbf{1956}, {4(6)}, 699--704.


\bibitem{FolMcD01}
Foley, R. D., McDonald, D. R.
\newblock Join the shortest queue: Stability and exact asymptotic.
\newblock {Annals of Applied Probability.} \textbf{2001}, {11(3)}, 569--607.


\bibitem{FolMcD05b}
Foley, R. D., McDonald, D. R.
\newblock Large deviations of a modified Jackson network: stability and rough asymptotic.
\newblock {Annals of Applied Probability.} \textbf{2005}. {15}, 519--541. 





\bibitem{Kes75}
Kesten, H.
\newblock Renewal theory for functionals of a markov-chain with general state space.
\newblock {Annals of   Probability.} \textbf{1974}, {2(3)}, 355--386.


\bibitem{LMZ07}
Liu, L., Miyazawa, M., Zhao, Y. Q.
\newblock{Geometric decay in a QBD process with countable background states with applications to a join-the-shortest-queue model.}
\newblock{\em Stochastic Models.} \textbf{2007}, {23(3)}, 413-438.


\bibitem{McD99}
McDonald, D. R.
\newblock Asymptotic of first passage times for random walk in an orthant.
\newblock {Annals of Applied Probability.} \textbf{1999}, {9(1)}, 110--145.




 
\bibitem{MiyZha04}
Miyazawa, M., Zhao, Y. Q.
\newblock{The stationary tail asymptotics in the $GI/G/1$-type queue with countably many background states.}
\newblock{Advances in Applied Probability.} \textbf{2004}, {36}, 1231--1251.


 \bibitem{Neu_book}
Neuts, M. F.
\newblock {\em Matrix Geometric Solutions in Stochastic Models - An Algorithmic Approach.}
\newblock John Hopkins University Press, Baltimore/London, 1981.

\bibitem{SauThesis}
Sauer, C.
\newblock {\em Stochastic Product From Networks with Unreliable Nodes: Analysis of Performance and Availability.} PhD thesis, Hamburg University, 2006.

\bibitem{SauDad03}
Sauer, C., Daduna, H.
\newblock  Availability formulas and performance measures for separable degradable networks.
\newblock {Economic Quality Control.} \textbf{2003}, {18(2)}, 165--194.



\bibitem{TanZha04}
Tang, J., Zhao, Y. Q.
\newblock{Stationary tail asymptotics of a tandem queue with feedback.}
\newblock{Annals of Operations Research .} \textbf{2008}, {160}, 173--189.



\bibitem{WhiChr58}
White, H., Christie, L. S.
\newblock{Queuing with Preemptive Priorities or with Breakdown.}
\newblock {Operations Research.} \textbf{1958}, {6(1)}, 79--95.


\end{thebibliography}
\end{document}